\newtheorem{theorem}[subsubsection]{Theorem}
\newtheorem{lemma}[subsubsection]{Lemma}
\newtheorem{proposition}[subsubsection]{Proposition}
\newtheorem{corollary}[subsubsection]{Corollary}
\newenvironment{example}{\medskip \refstepcounter{subsubsection}
\noindent  {\bf Example \thetheorem}.\rm}{\,}
\newenvironment{remark}{\medskip \refstepcounter{subsubsection}
\noindent  {\bf Remark \thetheorem}.\rm}{\,}
\newtheorem{quest}[subsubsection]{Question}
\newtheorem{theointro}{Theorem}
\newtheorem{theoin}{Proposition}
\newtheorem{corintro}[theoin]{Corollary}
\newtheorem{questintro}[theoin]{Question}
\newtheorem{conjintro}[theoin]{Conjecture}
\newtheorem{remarkintro}{Remark}
\def\Spec{\mathrm{Spec}}
\def\om{\omega}
\def\Vol{\mathrm{Vol}}
\def\Aut{\mathrm{Aut}}
\def\RR{\mathbb{R}}
\def\ZZ{\mathbb{Z}}
\def\NN{\mathbb{N}}
\def\FF{\mathbb{F}}
\def\rmBl{\mathrm{Bl}}
\def\CC{\mathbb{C}}
\def\PP{\mathbb{P}}
\def\ep{\varepsilon}
\def\>{\rangle}
\def\<{\langle}
\begin{document}

\title[A note on blow-ups of toric surfaces and CSC K\"ahler metrics]
{A note on blow-ups of toric surfaces and CSC K\"ahler metrics}
\author[C. Tipler]{Carl Tipler}
\address{D\'epartment de Math\'ematiques, Laboratoire Jean Leray,
2, Rue de la Houssini\`ere - BP 92208, F-44322 Nantes, FRANCE}
\email{carl.tipler@univ-nantes.fr}

\subjclass[2010]{Primary: 14M25, Secondary: 53C55}
\keywords{Toric surfaces, Constant scalar curvature K\"ahler metrics}

\begin{abstract}
Let $X$ be a compact toric surface. Then there exists 
a sequence of torus equivariant blow-ups of $X$ such that the blown-up toric surface admits a cscK metric.
\end{abstract}

\maketitle

\section{Introduction}
\label{secintro}

The problem of finding constant scalar curvature K\"ahler metrics (cscK for short) on
compact K\"ahler manifolds is of main interest in K\"ahler geometry since the works of 
Yau \cite{yau}, Tian \cite{tian} and Donaldson \cite{don}. The Yau-Tian-Donaldson
conjecture asserts that there exists a cscK metric on a polarized
K\"ahler manifold $(X,L)$ in the K\"ahler class $c_1(L)$ if and only if the pair $(X,L)$
is stable in a suitable GIT sense. At that time, we know that the existence
of a cscK metric implies the $K$-polystability by a result of Stoppa \cite{st}.
Stoppa's argument relies on an important result of Arezzo and Pacard on 
blow-ups of cscK manifolds \cite{ap1}, \cite{ap2}. Arezzo and Pacard managed
to show that if $(X,\om)$ is cscK, then the blow-up of $X$ at finitely many well-chosen 
points admits a cscK metric. 
In order to prove their result, they use a gluing method. They build the blown-up manifold by
gluing a local model, the blow-up of $\CC^n$ at $0$, endowed with the Burns-Simanca metric \cite{sim},
which is ALE and scalar flat.

Another result on blow-ups and special metrics is the one of Taubes \cite{taubes} that asserts that there
are anti-self-dual metrics on the blow-up at sufficiently many points of any smooth oriented $4$-manifold.
In the K\"ahler case, a metric is anti-self-dual if and only if it is scalar-flat. 

These two results suggest the following:

\begin{conjintro}
\label{conj}
 Let $X$ be a compact K\"ahler manifold. Then there exists a finite sequence
of blow-ups of $X$ such that the blown-up manifold obtained admits a cscK metric.
\end{conjintro}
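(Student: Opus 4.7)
Since Conjecture~\ref{conj} is open in full generality, I will sketch a strategy for the toric surface case that is the actual subject of this paper. The plan is to combine three ingredients: the Guillemin--Abreu combinatorial picture, in which a torus equivariant blow-up of a toric surface $X$ corresponds to truncating a corner of its Delzant polytope $P$, so that iterated equivariant blow-ups produce a polytope $\tilde P$ whose edge lengths are free parameters; Donaldson's theorem that a compact toric surface admits a cscK metric in a given polarization if and only if the polytope is K-stable in a precise combinatorial sense; and the Arezzo--Pacard gluing theorem, which produces a cscK metric on the blow-up of a cscK manifold at a suitably balanced configuration of points.

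A first natural attempt is purely combinatorial. Starting from $P$, I would perform a sequence of corner cuts to produce $\tilde P$ and then adjust the edge lengths so that Donaldson's K-stability condition holds. The Donaldson--Futaki invariant of a simple piecewise linear convex function is an explicit rational expression in the polytope data, and the goal is to choose the cut positions and sizes making this invariant strictly positive on every nontrivial such test function. The heuristic is that each corner cut introduces new continuous parameters while only adding finitely many new extremal destabilizing directions, so with enough cuts one should have more than enough freedom to push the obstructions out.

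An alternative route is to realize the target blown-up surface as an iterated equivariant blow-up of a \emph{known} cscK toric surface $Y$, such as $\PP^2$ or $\PP^1\times\PP^1$, rather than of $X$ itself. One would then apply Arezzo--Pacard to $Y$ and iterate, using that at each step the balancing condition needed to kill the obstructions from the reductive automorphism group can be solved because of the freedom in choosing the torus-fixed blow-up points and the relative exceptional volumes. To connect back to $X$, one would use Castelnuovo contraction for smooth toric surfaces to factor everything through a common equivariant blow-up of a minimal toric surface.

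The main obstacle in both strategies is controlling the toric Futaki invariant. For a toric surface the reductive part of $\Aut(X)$ contains $(\CC^*)^2$, so the Arezzo--Pacard balancing condition becomes a genuine nontrivial system of equations on the configuration and size data; in the combinatorial picture one must verify positivity of a growing family of Donaldson--Futaki invariants against piecewise linear destabilizers. The crux of the argument is thus to strike a balance between producing enough corner-cut parameters to solve the vanishing Futaki condition and maintaining control of the stability inequalities as the polytope acquires more edges.
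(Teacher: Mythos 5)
You are addressing a statement that the paper itself does not prove: Conjecture~\ref{conj} is stated as an open conjecture, and the paper only establishes the toric surface case (Theorem~\ref{theor}). Judging your sketch against that proof, your second route is in the right general direction --- realize a blow-up of $X$ as an iterated equivariant blow-up reachable from $\PP^1\times\PP^1$, and get the cscK metric from Arezzo--Pacard --- but it has a genuine gap at the decisive analytic step. You propose to handle the reductive automorphism group (which contains $(\CC^*)^2$) by solving the Arezzo--Pacard balancing condition ``because of the freedom in choosing the torus-fixed blow-up points and the relative exceptional volumes.'' There is no such freedom: the torus-fixed points form a finite rigid set, so the genericity and stability hypotheses of \cite[Theorem 1.3]{ap2} cannot be arranged by moving them, and one cannot expect the balancing condition for the full torus to hold at them. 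The paper avoids this entirely by a different mechanism: it defines $X_0=\PP^1\times\PP^1$ with the symmetric product metric and $X_{j+1}$ as the blow-up of $X_j$ at \emph{all} torus fixed points, and applies the equivariant version of Arezzo--Pacard modulo a finite group $G$ of isometries (generated by the coordinate swaps and sign changes in homogeneous coordinates) under which no nonzero holomorphic vector field is invariant. Working $G$-equivariantly and blowing up full $G$-orbits with equal weights reduces to the no-automorphisms case, so no balancing condition arises at all. This finite-symmetry-group trick is the missing idea in your proposal.

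Two further points. Your first, purely combinatorial strategy (tuning corner cuts so that Donaldson's K-stability inequalities hold) is not what the paper does, and as written it is only a heuristic: you give no argument that the infinitely many Donaldson--Futaki positivity conditions can be simultaneously achieved as the polytope acquires more edges --- indeed Donaldson's own $9$-point blow-up example shows that symmetry and vanishing Futaki invariant do not suffice. Second, your ``factor through a common equivariant blow-up of a minimal toric surface'' gestures at the right combinatorics, but the paper makes this precise with two explicit lemmas: Lemma~\ref{lemma1} trades an iterated blow-up of $\FF_n$ for one of $\FF_{n-1}$ (at the cost of at most one extra blow-up), and Lemma~\ref{lemma2} shows that any iterated toric blow-up of $X_0$ can be further blown up, by adding the missing rays to symmetrize the fan, until it equals some $X_j$. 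Without identifying the concrete target family $\lbrace X_j\rbrace$ that demonstrably carries cscK metrics, the reduction has nothing to land on.
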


In the case of ruled surfaces, this conjecture has been proved by Le\,Brun and Singer \cite{lesi}
when the base curve is of genus greater than or equal to $2$ and then in general by Kim, Le\,Brun and Pontecorvo \cite{klp}.
However, in their argument, even if the starting surface is toric, the resulting blown-up surface
does not admit the torus action anymore.

On the other hand, Rollin and Singer found another method to built cscK metrics on the blown-up surfaces,
based on parabolic structures \cite{rs}, from which we can deduce a lot of examples of toric surfaces with cscK metrics.

In this paper, we show that if the initial surface is toric, then we can prove the conjecture within the toric world. 

\begin{theointro}
\label{theor}
 Let $X$ be a compact complex smooth toric surface. Then there exists a finite sequence of blow-ups
of $X$ at torus fixed points such that the blown-up toric surface admits a cscK metric.
\end{theointro}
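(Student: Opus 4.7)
The strategy I would pursue combines the classification of smooth toric surfaces with an equivariant use of the Rollin-Singer construction of scalar-flat K\"ahler metrics on iterated blow-ups. Every smooth compact toric surface $X$ is obtained from a minimal model $X_0$ (either $\PP^2$ or a Hirzebruch surface $\FF_n$) by a sequence of torus-equivariant blow-ups at fixed points; equivalently, the fan $\Sigma_X$ refines the fan $\Sigma_{X_0}$, and further torus-equivariant blow-ups of $X$ at fixed points correspond to further refinements of $\Sigma_X$, obtained by inserting a ray $v_i + v_{i+1}$ into an existing $2$-cone $\la v_i, v_{i+1}\ra$. The theorem thus reduces to the purely combinatorial/metric problem of exhibiting a refinement of $\Sigma_X$ whose associated toric surface admits a cscK metric.

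My approach has three steps. First, I would reduce to the case where $X_0$ is a Hirzebruch surface $\FF_n$, since blowing up $\PP^2$ once at any torus fixed point produces $\FF_1$. Second, using the ruling $\FF_n \to \PP^1$, I would encode each maximal chain of infinitely near torus-fixed points arising from the morphism $X \to \FF_n$ (lying over one of the two torus-fixed points of $\PP^1$) as a torus-invariant parabolic structure on $\FF_n$; applied equivariantly, the theorem of Rollin and Singer then constructs a scalar-flat (hence cscK) K\"ahler metric on the associated iterated blow-up whenever this parabolic structure is stable. Third, I would produce a further torus-equivariant refinement of $\Sigma_X$ that dominates $\Sigma_X$ and whose associated parabolic data is stable.

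The main obstacle lies in the third step. Parabolic stability is governed by numerical inequalities among the weights attached to the exceptional chains, and the cone of admissible weights for a given combinatorial configuration may be empty, in which case $X$ itself need not admit a cscK metric. I expect to overcome this by inserting additional rays in $\Sigma_X$ near each existing exceptional divisor of $X \to \FF_n$: each such insertion introduces a new parabolic weight and enlarges the space of admissible weight configurations, and an inductive argument on the length of the exceptional chains should show that, after sufficiently many carefully placed insertions, a stable parabolic structure can always be found. Finally, the equivariance of the Rollin-Singer gluing under the torus action should follow from the symmetry of the chosen data, so the resulting scalar-flat K\"ahler metric is automatically toric.
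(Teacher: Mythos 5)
Your proposal takes a genuinely different route from the paper, which never invokes parabolic structures: the paper builds a tower $X_0=\PP^1\times\PP^1,X_1,X_2,\dots$ by blowing up \emph{all} torus fixed points at each stage, obtains cscK metrics on every $X_j$ by applying the Arezzo--Pacard theorem (Theorem~\ref{blupcscK}) equivariantly with respect to a finite group of polytope symmetries killing all invariant holomorphic vector fields, and then shows by two combinatorial lemmas on fans (Lemma~\ref{lemma1} and Lemma~\ref{lemma2}) that any toric surface can be blown up first into an iterated blow-up of $\FF_0$ and then into one of the $X_j$. Unfortunately, your route contains a gap that I do not see how to close.

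The problem is in your second and third steps. Every torus fixed point of $\FF_n$, and of any iterated toric blow-up of it, lies over one of the two torus-fixed points of the base $\PP^1$. Hence a torus-invariant parabolic structure on $\FF_n\to\PP^1$ has at most two marked fibers, and any further blow-up at torus fixed points only refines the chain and weight data on those same two fibers; it can never produce a third marked fiber. But a rank-two parabolic bundle on $\PP^1$ with only two parabolic points is never parabolically stable: the orbifold fundamental group of a sphere with two orbifold points is cyclic, so there are no irreducible rank-two representations. Concretely, for $E=\cO\oplus\cO(n)$ with weights $\alpha_1,\alpha_2\in(0,1)$ the subbundle $\cO(n)$ has parabolic slope at least $n$, while $\mu_{par}(E)=\tfrac12(n+\alpha_1+\alpha_2)<\tfrac12(n+2)$; so for $n\ge 2$ the structure is not even semistable, and for $n=0,1$ one can at best reach strict polystability for very special split configurations. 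The defect $\tfrac12(n-\alpha_1-\alpha_2)>0$ of the destabilizing subbundle is bounded below independently of how many rays you insert, because the total weight budget is capped by the number of marked fibers, which equivariance pins at two. So the inductive argument you hope for in step three cannot get off the ground: enlarging the exceptional chains does not enlarge the cone of admissible weights in the relevant direction. This is exactly why the parabolic method produces ``a lot of'' toric cscK surfaces but not a proof of the theorem, and why the paper instead first trades $\FF_n$ for $\FF_0$ by explicit toric blow-ups and then uses the Arezzo--Pacard gluing, whose only hypothesis --- that no nonzero holomorphic vector field be invariant under a finite isometry group --- is easily arranged using the symmetries of the polytope.
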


\begin{remarkintro}
\rm
 By a remark of Wang and Zhou \cite{wz}, using the theorem of Arezzo, Pacard and Singer 
on blow-ups of extremal metrics \cite{aps},
we know that each toric surface admits an extremal metric. However, from Matsushima and Lichnerowicz obstruction, we know from Calabi \cite{ca} that the Hirzebruch surfaces admit no extremal metric of constant scalar curvature.
Thus we need to blow-up toric surfaces if we want them to admit cscK metrics.
\end{remarkintro}

\begin{remarkintro}
\rm
About the existence of K\"ahler-Einstein metrics on toric manifolds, a recent result of Legendre \cite{Le}
shows that each toric orbifold admits a unique singular K\"ahler-Einstein metric.
\end{remarkintro}

The proof of this result relies on simple considerations on the
classification of toric surfaces and the result of Arezzo and Pacard on blow-ups of cscK metrics.
We hope that it will outline a stabilization process of blowing up a manifold at suitable points and its link
with the existence of cscK metrics.
Basically, the idea is to blow up the toric surface until it has a good shape.
We first build an infinite sequence of toric surfaces 
$$
 ... \rightarrow X_{j+1} \rightarrow X_j \rightarrow ... \rightarrow X_1 \rightarrow X_0
$$
with $X_0=\PP^1\times\PP^1$
and $X_{j+1}$ obtained from $X_j$ by blowing up all the torus fixed points.
By the theorem of Arezzo and Pacard, each of these $X_j$ admits a cscK metric.

Then, using the classification of toric surfaces, we prove that for each smooth toric surface $X$,
there exists a sequence of blow-ups at torus fixed points such that the blown-up surface is
one of the $X_j's$. From the fan description point of view, we add rays to the fan describing $X$ until
it becomes very symmetric, isomorphic to the fan of an $X_j$. In fact it turns out that 
the $X_j$ endowed with a suitable polarization are $K$-polystable and asymptotically Chow polystable.

We will deduce from our result the following corollary:

\begin{corintro}
\label{cori}
  Let $X$ be a smooth compact toric surface. Then there is a sequence of toric blow-ups of $X$ $($see Section~\ref{blup}$)$
$$\mathrm{Bl}_{(p_m,\ldots,p_1)}(X) \rightarrow X$$ and a polarization
$L_X \rightarrow \rmBl_{(p_m,\ldots,p_1)}(X)$ such that $(\rmBl_{(p_m,\ldots,p_1)}(X),L_X)$ is $K$-polystable and asymptotically Chow-polystable.
\end{corintro}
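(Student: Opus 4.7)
The corollary should follow essentially for free from Theorem~\ref{theor} once two standard implications of the existence of a cscK metric are invoked. The overall strategy is to apply Theorem~\ref{theor} to produce a torus-equivariant blow-up $\mathrm{Bl}_{(p_m,\ldots,p_1)}(X) \to X$ carrying a cscK K\"ahler metric $\omega$, and then to deduce both stability properties from the existence of $\omega$ together with the toric structure.  By the construction sketched in the introduction, the target of this sequence of blow-ups can in fact be arranged to be isomorphic to one of the highly symmetric surfaces $X_j$ in the distinguished tower $\cdots \to X_j \to X_{j-1} \to \cdots \to X_0 = \mathbb{P}^{1}\times\mathbb{P}^{1}$, which concentrates the work onto understanding these $X_j$.

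Next, one has to promote the K\"ahler class $[\omega]$ to an honest polarization $L_X$. The iterative Arezzo--Pacard construction produces cscK metrics in classes of the form $\pi^{\ast}[\omega_{\mathrm{base}}] - \varepsilon^{2}\sum_i a_i\, \mathrm{PD}[E_i]$ for all sufficiently small $\varepsilon > 0$; starting from a rational product class on $X_0 = \mathbb{P}^{1}\times\mathbb{P}^{1}$ and choosing rational $\varepsilon$ at each stage (this is possible since the set of permissible parameters is open), one obtains, after clearing denominators, an ample line bundle $L_X \to \mathrm{Bl}_{(p_m,\ldots,p_1)}(X)$ with $c_{1}(L_X) = [\omega]$.

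With the polarized toric surface $(\mathrm{Bl}(X),L_X)$ in hand, $K$-polystability is exactly Stoppa's theorem \cite{st}, applied to the cscK metric $\omega$ in $c_1(L_X)$; note that the identity component of the automorphism group of a toric surface is a complex torus, hence reductive, so one is in the setting in which Stoppa's conclusion applies. Asymptotic Chow polystability follows from the analogous theorem for cscK polarized manifolds whose automorphism group has reductive identity component (Mabuchi's refinement of Donaldson's argument); alternatively, because the surfaces $X_j$ are very symmetric, one may verify the Futaki--Ono-type obstructions vanish directly on the moment polytope.

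The main obstacle I anticipate is Step~2, the rational-polarization issue: Arezzo--Pacard provides cscK metrics only in a small open window of K\"ahler classes, and one must verify inductively, along the entire tower $X_0 \to X_1 \to \cdots \to X_j$, that rational parameters can be selected so that the accumulated class remains in that window and ultimately yields a genuine polarization. Once this is settled the stability conclusions are off-the-shelf consequences of the cscK metric's existence combined with the reductive (in fact toric) nature of the automorphism group.
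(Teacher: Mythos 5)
Your overall route is the paper's route for the $K$-polystability half: apply Theorem~\ref{theor} to land on one of the symmetric surfaces $X_j$, observe that the Arezzo--Pacard construction allows the cscK class to be chosen rational (hence, after scaling, equal to $c_1(L_X)$ for a polarization $L_X$), and invoke Stoppa's theorem. The paper treats the rationality point exactly as you do, as a consequence of the openness of the admissible parameter range, so the ``main obstacle'' you flag is genuinely routine.

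The gap is in your treatment of asymptotic Chow polystability. Your primary argument --- that it ``follows from the analogous theorem for cscK polarized manifolds whose automorphism group has reductive identity component'' --- invokes a theorem that does not exist. Donaldson's result (cscK implies asymptotic Chow stability) requires the automorphism group to be discrete, and the $X_j$ are toric, so they carry a two-dimensional torus of automorphisms. Mabuchi's extension to the non-discrete case is conditional: it requires the vanishing of an additional obstruction (a family of Lie-algebra characters generalizing the Futaki invariant), and this vanishing is \emph{not} automatic for cscK manifolds; there are toric K\"ahler--Einstein manifolds that are asymptotically Chow \emph{unstable}. What you list as an ``alternative'' --- verifying the Futaki--Ono obstructions vanish on the moment polytope --- is in fact the necessary step, and it is precisely what the paper does: it expresses Mabuchi's obstruction through the coefficients $\mathcal{F}_{\Delta_j,i}$ of the polynomial $k\mapsto \mathrm{Vol}(\Delta_j)\,\mathbf{s}_{\Delta_j}(k)-kE_{\Delta_j}(k)\int_{\Delta_j}\mathbf{x}\,dv$ and shows $\mathcal{F}_{\Delta_j,1}=\mathcal{F}_{\Delta_j,2}=0$ using the reflection symmetries of $\Delta_j$ in the two coordinate axes (symmetries which the construction of Proposition~\ref{prop} was designed to preserve). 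To complete your proof you must promote that alternative to the main argument and actually carry out the symmetry computation.
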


\begin{remarkintro}
\rm
 Donaldson has proved that a polarized toric surface $(X,L)$ admits
a cscK metric in the class $c_1(L)$ if and only if $(X,L)$ is $K$-polystable \cite{Don1}. 
However, it is a hard
problem, even for toric surfaces, to check $K$-polystability. It contrasts with our explicit construction.
We can compute from the fan describing the toric surface the points to blow-up in order to reach one of
the $X_j's$.
\end{remarkintro}

Corollary~\ref{cori} lies on deep results of Mabuchi \cite{mab} and Stoppa \cite{st} and computation of the Lie algebra characters
associated to Mabuchi's obstruction. This computation is simplified by the symmetries of the surfaces.

\begin{remarkintro}
\rm
 The idea of using symmetries on toric polytopes has also been used by Donaldson \cite{don}, in order
to construct a $9$ points blow-up of $\CC\PP^2$ that is not $K$-polystable for the associated polarization while the Futaki invariant vanishes. In particular, this polarized toric surface admits no cscK metric in the corresponding K\"ahler class.
\end{remarkintro}

We will also compute a bound on the number of blow-ups necessary to get a cscK metric with our construction. We will see
by an example that this bound is far from being optimal, raising the natural question:

\begin{questintro}
 What is the minimal number of blow-ups necessary to obtain a cscK metric on an iterated blow-up of the Hirzebruch surface $\FF_n$ ?
\end{questintro}

We point out that, by Wang and Zhou \cite{wz}, each toric surface admits an extremal metric. This metric is cscK
if and only if the Futaki invariant vanishes \cite{futa}, and an explicit formula for this invariant
described by the polytope is known by the work of Donaldson \cite{don}. 
However, this is far from answering the question since the K\"ahler classes of the extremal metrics
and the K\"ahler classes where the Futaki invariant vanishes might be far from each other.

\subsection{Plan of the paper}
In Section \ref{blup}, we recall results on blow-ups of toric surfaces. Then the proof of Theorem~\ref{theor}
is given in Section \ref{proof}. The last section deals with stability issues related to our construction.

\subsection{Acknowledgments}
I would like to thank Professors Vestislav Apostolov, Paul Gauduchon, Yann Rollin and Michael Singer for their constant supports. I would also like to thank the referee for useful comments.
\section{Blow-ups of toric CSCK surfaces}
\label{blup}

\subsection{Toric surfaces}
A toric surface $X$ is a complex algebraic surface endowed with 
an effective action of a complex torus $T^{\CC}$ of dimension $2$, such that there exists
an open and dense orbit in $X$. Thus $X$ is a compactification of $T^{\CC}$ and is obtained
by gluing divisors to this torus. The structure of $X$ is then entirely described by the way
of glueing these divisors to the torus, and this procedure of gluing is encoded by a fan $\Sigma=\Sigma(X)$ in a $\ZZ$-lattice $N$ of rank $2$ \cite{oda}.
A fan $\Sigma$ is a set of strongly convex rational polyhedral cones $\sigma$ in $N_{\RR}=N\otimes_{\ZZ}\RR$
satisfying the gluing conditions:
\begin{itemize}
 \item If $\sigma\in\Sigma$ and $\tau$ is a face of $\sigma$ then $\tau\in\Sigma$
 \item If $\sigma_1, \sigma_2\in\Sigma$ then $\sigma_1\cap\sigma_2$ is a face of  $\sigma_1$ and $\sigma_2$
\end{itemize}

To each cone $\sigma$ in $\Sigma$, one can associate an affine toric variety $U_{\sigma}=\Spec(\CC[\sigma^\vee\cap M])$,
where $M$ is the dual lattice of $N$ and 
$$\sigma^\vee=\lbrace u\in M_\RR ; \langle u, x \rangle \geq 0\; \text{ for all } x\in\sigma \rbrace.$$
Then the conditions of the fan ensure that these affine charts are glued together and form a toric variety $X$.

This gives a simple description of these surfaces as each of their algebraic properties admits a 
combinatorial counterpart in terms of the fan.

We will denote by $\Sigma^{(i)}(X)$ the set of cones of dimension $i$ in $\Sigma(X)$.
We will identify the rays in $\Sigma^{(1)}(X)$ and primitive generators in $N$ of these rays.

\begin{example}
Let $\FF_n$ be the $n$-th Hirzebruch surface, that is the total space of the fibration 
$$\PP(\mathcal{O}\oplus \mathcal{O}(n)) \rightarrow \CC\PP^1.$$
Note that $\FF_0=\CC\PP^1\times\CC\PP^1$. 
For a suitable coordinate $N\simeq \ZZ^2$, we have $\Sigma^{(1)}(\FF_n)=\lbrace (0,-1),(1,0),(0,1),(-1,-n) \rbrace$.
As an example, $\FF_2$
admits the fan description of Figure 1.

\begin{figure}[htbp]
\label{fig0}
\psset{unit=0.95cm}
\begin{pspicture}(-3.5,-4)(3.5,0)
$$
\xymatrix @M=0mm{
\bullet & \bullet & \bullet & \bullet & \bullet \\
\bullet & \bullet & \bullet & \bullet & \bullet \\
 \bullet & \bullet & \bullet\ar[u]^{e_2}\ar[r]^{e_1}\ar[ldd]\ar[d]& \bullet & \bullet \\
\bullet & \bullet & \bullet & \bullet & \bullet \\
\bullet & \bullet & \bullet & \bullet & \bullet 
}
$$
\end{pspicture}
\caption{$\FF_2$}
\end{figure}

\noindent Here we only represent the elements of $\Sigma^{(1)}(\FF_2)$ by arrows. The other cones of $\Sigma$ are
$\lbrace 0 \rbrace$ and the two dimensional cones delimited by the rays of $\Sigma^{(1)}(\FF_n)$.
We will use this description of a fan in the remaining of this paper.

\end{example}

\subsection{Blow-ups}

The blow-up of a toric surface $X(\Sigma)$ along a torus invariant orbit closure is described in term of the fan $\Sigma$. To each cone
$\sigma\in \Sigma$, one can associate a torus invariant closed subvariety $V_{\sigma}$ as the closure of $\Spec(\CC [\sigma^{\perp}\cap M ])$ in $X$, where $\sigma^{\perp}=\lbrace u\in M_\RR ; \langle u , x \rangle =0 \text{ for all } x \in \sigma \rbrace$. 
Assume that $\sigma$ is of dimension 2, then $V_{\sigma}$ is a torus fixed point. 
Let $\tau$ be the ray in $N_{\RR}$ generated by the sum of the primitive generators of 
one-dimensional faces of $\sigma$. Then $\tau$ induces a subdivision of $\sigma$ into rational strictly
convex polyhedral cones, $\sigma=\sigma'\cup\sigma'' $. Then the blow-up
of $X$ at $V_{\sigma}$ is the toric surface described by the fan $(\Sigma\setminus{\lbrace \sigma \rbrace })\cup{\lbrace \sigma',\sigma'',\gamma\rbrace }$, where $\gamma =\sigma'\cap\sigma''$.

\begin{example} Consider the cone $\sigma_1\in \Sigma(\FF_2)$ generated by $(0,1)$ and $(-1,-n)$ in the lattice $N=\ZZ^2$.
The blow-up of $\FF_2$ at the point $p=V_{\sigma_1}$ is pictured on Figure 2.

\begin{figure}[htbp]
\psset{unit=0.95cm}
\begin{pspicture}(-3.5,-4)(3.5,0)
$$
\xymatrix @M=0mm{
\bullet & \bullet & \bullet & \bullet & \bullet \\
\bullet & \bullet & \bullet & \bullet & \bullet \\
 \bullet & \bullet & \bullet\ar[u]\ar[r]\ar[ld]\ar[ldd]\ar[d]& \bullet & \bullet \\
\bullet & \bullet & \bullet & \bullet & \bullet \\
\bullet & \bullet & \bullet & \bullet & \bullet 
}
$$
\end{pspicture}
\caption{$\rmBl_p(\FF_2)$}
\end{figure}
\end{example}

We will say that a blow-up is \textit{toric} if it is the blow-up at torus fixed points, so that the action
of the torus lifts to the blown-up manifold.
We introduce the following notation: If $X$ is the blow-up of $Y$ at a point $p\in Y$,
we denote $X=\rmBl_p(Y)$. Then by Proposition~\ref{classif},
as $\FF_1\simeq \rmBl_p(\CC\PP^2)$ for $p$ a torus fixed point,
any compact toric surface $X$ different from $\CC\PP^2$ can be written as $\rmBl_{p_m}(\rmBl_{p_{m-1}}\ldots(\rmBl_{p_1}(\FF_n))\ldots)$
for some $n\geq 0$ and $p_{j+1}\in \rmBl_{p_j}(\rmBl_{p_{j-1}}\ldots(\rmBl_{p_1}(\FF_n))\ldots)$. 
We will use the notation $\rmBl_{(p_m,\ldots,p_1)}(\FF_n)=\rmBl_{p_m}(\rmBl_{p_{m-1}}\ldots(\rmBl_{p_1}(\FF_n))\ldots)$
and call it an \textit{iterated blow-up}.

The combinatorial description of singular points and completness for toric surfaces lead to the following result \cite[page 43]{fu}:

\begin{proposition}
\label{classif}
 Each smooth compact toric surface is obtained from an iterated toric blow-up of $\CC\PP^2$
or one of the Hirzebruch surfaces $\FF_n$.
\end{proposition}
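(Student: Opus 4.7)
The plan is combinatorial, working entirely with the fan $\Sigma = \Sigma(X)$ in the rank-two lattice $N$. Label the primitive generators of the rays of $\Sigma$ cyclically (counterclockwise) as $v_1, \ldots, v_d$. Smoothness of $X$ translates to $\det(v_i, v_{i+1}) = 1$ for every $i$ (indices modulo $d$), and compactness translates to the two-dimensional cones $\la v_i, v_{i+1} \ra$ covering all of $N_\RR$. For each $i$, convexity produces a unique integer $a_i$ with
$$v_{i-1} + a_i v_i + v_{i+1} = 0,$$
and $a_i$ equals the self-intersection of the torus-invariant divisor $D_{v_i}$.

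The first ingredient is the blow-down criterion: $a_i = -1$ is equivalent to $v_i = v_{i-1} + v_{i+1}$, which by Section~\ref{blup} means precisely that removing $v_i$ from $\Sigma$ produces a smooth complete fan, corresponding to a smooth compact toric surface $Y$ with $X = \rmBl_p Y$ for $p = V_{\la v_{i-1}, v_{i+1}\ra}$. I then dispatch the base cases. For $d = 3$, a change of lattice basis normalizes the fan to $\{e_1, e_2, -e_1 - e_2\}$, so $X \cong \CC\PP^2$. For $d = 4$, one similarly normalizes and checks that the remaining freedom is exactly one integer, recovering the fan of some $\FF_n$ with $n \geq 0$.

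The heart of the proof is the claim that whenever $d \geq 5$, some $a_i$ equals $-1$, so one can perform a toric blow-down; induction on $d$ then concludes. I first extract the global constraint
$$\sum_{i=1}^d a_i = 12 - 3d,$$
which follows from Noether's formula $K_X^2 + d = 12$ (using $\chi(\cO_X) = 1$ for the rational surface $X$) together with the toric identity $K_X^2 = \sum_i a_i + 2d$, itself a consequence of $K_X = -\sum_i D_{v_i}$ and the fact that cyclically adjacent $D_{v_i}$ meet in exactly one point. For $d \geq 5$ this forces some $a_i$ to be strictly negative.

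The hard part is then to rule out every $a_i$ being either $\geq 0$ or $\leq -2$. I would handle this by tracking the rays through the recursion $v_{i+1} = -a_i v_i - v_{i-1}$ and comparing with the cyclic closure condition, using that the $v_i$ sweep out the full angle $2\pi$ counterclockwise. The case $a_i = 0$ forces $v_{i-1} = -v_{i+1}$, which severely collapses the fan, while $a_i \geq 1$ forces $v_{i-1}$ and $v_{i+1}$ to lie on the same side of $\RR v_i$, so the rotation from $v_{i-1}$ to $v_{i+1}$ through $v_i$ exceeds $\pi$. A finite case analysis on the sign pattern of $(a_i)$, combined with the sum formula above, excludes any valid configuration with $d \geq 5$ and no $a_i = -1$, yielding the required $(-1)$-curve and closing the induction.
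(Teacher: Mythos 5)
The paper itself does not prove this proposition; it is quoted from Fulton's book (\cite[page 43]{fu}), so there is no in-paper argument to compare against. Your outline is the standard one and essentially Fulton's: encode $X$ by the cyclically ordered primitive generators $v_1,\dots,v_d$, note that $a_i=-1$ is exactly the toric blow-down criterion, settle $d=3,4$ by normalizing a lattice basis, and for $d\geq 5$ produce a $(-1)$-ray and induct on $d$. The setup, the blow-down criterion, the base cases, and the identity $\sum_i a_i=12-3d$ are all correct.

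The gap is in the decisive step, namely that $d\geq 5$ forces some $a_i=-1$. You reduce this to excluding configurations in which every $a_i$ is either $\geq 0$ or $\leq -2$, and you propose to do so by ``a finite case analysis on the sign pattern combined with the sum formula.'' Those two ingredients provably do not suffice. Your rotation argument does show, once completed, that for $d\geq 5$ at most two indices have $a_i\geq 0$ and that these must be adjacent; but the sum formula then only yields $a_{j_1}+a_{j_2}\geq 8-d$ for the non-negative entries, which is vacuous for $d\geq 8$. For instance the sign pattern with two adjacent zeros and all remaining $a_i=-2$ satisfies $\sum_i a_i=12-3d$ exactly when $d=8$, and nothing you have written excludes it. The missing ingredient is a quantitative control on runs of indices with $a_i\leq -2$: if $u_0,\dots,u_k$ are consecutive rays with $u_{j-1}+u_{j+1}=c_ju_j$ and $c_j\geq 2$ at every interior index, then in the basis $(u_0,u_1)$ an easy induction gives that $u_j$ has first coordinate $\leq -1$ and second coordinate $\geq 1$ for all $j\geq 2$, so such a chain never rotates through an angle of $\pi$. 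Applied to the run of at least $d-2$ consecutive indices with $a_i\leq -2$, this contradicts completeness outright when there are zero or one non-negative indices; when there are two adjacent ones, say $a_1,a_2\geq 0$ with $v_1=e_1$, $v_2=e_2$, one must still compute $\det(v_3,v_d)=1-a_1a_2>0$, which forces $a_1a_2=0$, and then a short further computation shows the relation at the adjacent ray has coefficient $1$, i.e.\ a $(-1)$-ray appears after all. Until this chain lemma and the adjacent-pair case are written out, the induction has no engine, so as it stands the proposal is an accurate plan rather than a proof.
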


\subsection{The Arezzo and Pacard theorem}
In the proof of Theorem~\ref{theor}, we will build a sequence of toric surfaces, each admitting a cscK metric.
This sequence is built by successive blow-ups.
We recall the following theorem of Arezzo and Pacard \cite{ap1} on blow-ups of cscK manifolds:

\begin{theorem}
Let $(X,\om)$ be a compact cscK manifold and let $(p_j)_{j=1\ldots m}$ be $m$ distinct points of $X$.
Assume that $X$ admits no non-zero holomorphic vector field. 
Consider the blow-up of $X$ at $(p_j)$:
$$
\pi: \rmBl_{( p_j ) }(X) \rightarrow X.
$$
Then for each strictly positive real numbers $(a_j)$,
there exists $\ep_0>0$ such that for each $\ep\in(0,\ep_0)$, there is a cscK metric on 
$\rmBl_{(p_j)}(X)$ in the K\"ahler class 
$$
\pi^*[\om]-\ep^2\sum_j a_j \mathrm{PD}(E_j)
$$
where $\mathrm{PD}(E_j)$ denotes the Poincar\'e dual of the exceptional divisor $E_j=\pi^{-1}(p_j)$.
\end{theorem}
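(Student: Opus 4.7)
The plan is to use a gluing construction: build an approximate cscK metric on $\rmBl_{(p_j)}(X)$ out of the given cscK metric on $X$ together with a scaled model metric near each exceptional divisor, then perturb it to an exact solution via the implicit function theorem. The key local model is the Burns--Simanca metric $\eta$ on $\rmBl_0(\CC^n)$: it is an ALE scalar-flat Kähler metric asymptotic to the Euclidean metric at infinity, so $\ep^2\eta$ has K\"ahler class (after suitable identification) of order $\ep^2\,\mathrm{PD}(E)$ and matches, up to higher order in $\ep$, with the flat model on $\CC^n$ to which $\omega$ is asymptotic in Darboux-type coordinates centered at $p_j$.

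The construction of the approximate metric $\omega_\ep$ on $\rmBl_{(p_j)}(X)$ proceeds as follows. First, fix K\"ahler potentials near each $p_j$ so that $\omega = i\partial\bar\partial \psi_j$ with $\psi_j(z) = \tfrac{1}{2}|z|^2 + O(|z|^4)$. Remove balls of radius $r_\ep$ (with $\ep \ll r_\ep \ll 1$) around each $p_j$; on $\rmBl_0(\CC^n)$ take the rescaled Burns--Simanca potential $\ep^2 \varphi_{BS}(z/\ep)$; on the annular overlap, use a cut-off function to interpolate between the two potentials. The resulting K\"ahler metric $\omega_\ep$ has scalar curvature equal to the constant $s(\omega)$ away from the annular regions and satisfies, in weighted H\"older norms adapted to the geometry, $s(\omega_\ep) - \bar s(\omega_\ep) = O(\ep^{2n-2})$, so $\omega_\ep$ is a very good approximate solution for $\ep$ small.

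To perturb $\omega_\ep$ to a genuine cscK metric, look for $\phi$ with $s(\omega_\ep + i\partial\bar\partial\phi) = \mathrm{const}$. The linearization at $\omega_\ep$ is the Lichnerowicz operator $\cL_\ep = D_\ep^* D_\ep$, where $D_\ep\phi = \bar\partial\nabla^{1,0}_\ep\phi$; its kernel consists of potentials whose gradients are holomorphic. The hypothesis that $X$ has no non-zero holomorphic vector field, together with the observation that $\rmBl_0(\CC^n)$ endowed with $\eta$ has no decaying holomorphic vector fields, shows (via a rescaling/concentration argument on appropriately chosen weighted H\"older spaces on the blow-up and the ALE model) that $\cL_\ep$ is uniformly invertible modulo constants as $\ep\to 0$, with a bound of the form $\|\cL_\ep^{-1}\| \leq C\ep^{-\kappa}$ for some explicit $\kappa$. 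This is the main analytic obstacle: one must patch together the Fredholm theory on the compact base (where the kernel of $\cL$ reduces to constants by hypothesis) with the asymptotic theory on $\rmBl_0(\CC^n)$ (using the decay of the Burns--Simanca geometry), ensuring that the bound on $\cL_\ep^{-1}$ degrades more slowly than the quadratic nonlinearity improves.

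Once the uniform invertibility is established, rewrite the equation $s(\omega_\ep + i\partial\bar\partial\phi) = \mathrm{const}$ as a fixed point equation $\phi = \cL_\ep^{-1}(\text{error} + Q_\ep(\phi))$, where $Q_\ep$ is the quadratic remainder. A standard contraction mapping argument on a small ball in the weighted H\"older space, using the control $\|\text{error}\|=O(\ep^{2n-2})$, the bound on $\cL_\ep^{-1}$, and the quadratic nature of $Q_\ep$, yields, for all $\ep$ smaller than some $\ep_0$, a unique small solution $\phi_\ep$. The cohomology class of $\omega_\ep + i\partial\bar\partial\phi_\ep$ is, by construction, $\pi^*[\omega] - \ep^2\sum_j a_j \mathrm{PD}(E_j)$ (up to reabsorbing the scaling constants $a_j$ into the choice of metric on each model), and the resulting metric has constant scalar curvature, concluding the proof.
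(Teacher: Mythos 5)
This theorem is not proved in the paper at all: it is recalled verbatim from Arezzo and Pacard \cite{ap1} and used as a black box, so there is no internal argument to compare yours against. Your sketch does faithfully reproduce the strategy of the cited proof --- the Burns--Simanca ALE scalar-flat model on $\rmBl_0(\CC^n)$, the cut-off gluing of K\"ahler potentials over an annulus $\ep\ll r_\ep\ll 1$, the identification of the resulting K\"ahler class with $\pi^*[\om]-\ep^2\sum_j a_j\mathrm{PD}(E_j)$, and the perturbation to an exact solution by a fixed-point argument whose linearization is the Lichnerowicz operator with kernel reduced to constants by the hypothesis on holomorphic vector fields.

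As a standalone proof, however, your text has a genuine gap exactly where the theorem is hard. The assertion that $\cL_\ep$ is ``uniformly invertible modulo constants with $\|\cL_\ep^{-1}\|\le C\ep^{-\kappa}$'' and that this bound ``degrades more slowly than the quadratic nonlinearity improves'' is essentially the entire technical content of \cite{ap1}: one must choose the weight of the H\"older spaces in a precise range dictated by the indicial roots of the bi-Laplacian on the cone and by the decay rate of the Burns--Simanca metric (which in complex dimension $2$ --- the only case used in this paper --- is the slowest and hence the most delicate), construct a right inverse by patching parametrices on the compact piece and the ALE piece, and then verify that the loss $\ep^{-\kappa}$, the size of the gluing error measured in the weighted norm, and the quadratic estimate on the remainder $Q_\ep$ are mutually compatible on a ball of the correct radius; this numerology is what forces the specific choice of $r_\ep$ as a power of $\ep$ and is not automatic. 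None of it is verified in your proposal, and it cannot be waved through: the same scheme fails if the local model is not scalar-flat or does not decay fast enough. You should either carry out these weighted estimates explicitly or state that you are invoking \cite{ap1} for them, as the paper itself does.
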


In the second paper \cite{ap2}, Arezzo and Pacard proved a similar result when $X$ admits holomorphic vector fields. However,
in this case the blow-up points need to satisfy some stability and genericity conditions \cite[Theorem 1.3.]{ap2}.
Another way to use their theorem in this case is to perform the analysis modulo a finite group of automorphisms
that preserves no holomorphic vector field. We will use a special case of \cite[Theorem 1.4]{ap2}.

\begin{theorem}
\label{blupcscK}
 Let $(X,\om)$ be a compact cscK manifold.
Let $G\subset \Aut(X)$ be a finite group of isometries of $\om$, and $G\cdot p^1=\lbrace p^1_1,\ldots,p^1_{m_1} \rbrace, \ldots, G\cdot p^r=\lbrace p^r_1,\ldots,p^r_{m_r} \rbrace$ distinct orbits of points $( p^1, \ldots, p^r ) \in X^r$.
Assume moreover that no non-zero holomorphic vector field is $G$-invariant.
Consider the blow-up of $X$ at $(p_j^i)$:
$$
\pi: \rmBl_{( p_j^i ) }(X) \rightarrow X.
$$
Then for any strictly positive real numbers $(a_i)_{i=1\ldots r}$,
there exists $\ep_0>0$ such that, for each $\ep\in(0,\ep_0)$, there is a cscK metric $\om_\ep$ on 
$\rmBl_{(p^i_j)}(X)$ in the K\"ahler class 
$$
\pi^*[\om]-\ep^2 \sum_{i,j}  a_i \mathrm{PD}(E^i_j),
$$
where $\mathrm{PD}(E^i_j)$ denotes the Poincar\'e dual of the exceptional divisor $E^i_j=\pi^{-1}(p^i_j)$.
Moreover, $G$ lifts to a finite subgroup $\tilde{G} \subset \Aut(\rmBl_{( p^i_j ) }(X))$ of isometries of $\om_\ep$.
\end{theorem}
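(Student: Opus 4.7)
My approach would be to adapt the Arezzo--Pacard gluing construction to the $G$-equivariant setting. First, I would build an approximate cscK metric $\om_\ep$ on $\rmBl_{(p^i_j)}(X)$ by gluing, at each blow-up point $p^i_j$, a suitably rescaled Burns--Simanca ALE scalar-flat K\"ahler metric on $\rmBl_0(\CC^n)$ (with $n=\dim_\CC X$) to the background cscK metric $\om$. The scaling at $p^i_j$ is taken proportional to $\ep^2 a_i$, so that all points within the orbit $G\cdot p^i$ receive the same weight. Since $G$ acts by isometries of $\om$ and permutes the points of each orbit, the action lifts to a group $\tilde G$ on the blow-up, and $\om_\ep$ is $\tilde G$-invariant by construction; its K\"ahler class is precisely $\pi^*[\om]-\ep^2\sum_{i,j} a_i \mathrm{PD}(E^i_j)$.

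Next, I would look for a genuine cscK metric as a perturbation $\om_\ep+i\d\delb\v$ with $\v$ small and $\tilde G$-invariant. This reduces to a nonlinear fourth-order elliptic equation whose linearization at $\om_\ep$ is, up to lower-order terms controlled by $\ep$, the Lichnerowicz operator $L_{\om_\ep}$. The kernel of $L_{\om}$ on $X$ consists of holomorphy potentials of holomorphic Killing vector fields, so on $G$-invariant functions it reduces to the constants, precisely because no non-zero holomorphic vector field on $X$ is $G$-invariant. Since holomorphy potentials on the blow-up all lift from $X$, the same vanishing persists on $\tilde G$-invariant potentials at the level of $\rmBl_{(p^i_j)}(X)$.

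With trivial kernel and cokernel on $\tilde G$-invariant functions, I would apply an implicit function theorem in the weighted H\"older spaces adapted to the gluing, as in \cite{ap1,ap2}, provided one controls the inverse of $L_{\om_\ep}$ uniformly in $\ep$. The main technical obstacle is precisely this uniform invertibility: $L_{\om_\ep}$ degenerates in the gluing necks as $\ep\to 0$, and one must match the harmonic decay on the ALE ends with the local behavior near the blow-up points on $X$, in weighted norms with carefully chosen indicial weights. The payoff of working equivariantly is that it bypasses the balancing and genericity conditions of \cite[Theorem 1.3]{ap2}: on $\tilde G$-invariant functions the obstruction to solving the nonlinear equation simply vanishes. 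Carrying out the entire contraction argument within the $\tilde G$-invariant category then produces a $\tilde G$-invariant cscK potential $\v$, and hence a cscK metric $\om_\ep$ of which $\tilde G$ acts by isometries.
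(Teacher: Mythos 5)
Your outline is a faithful sketch of the equivariant Arezzo--Pacard gluing argument, and the key structural point --- that carrying out the perturbation in the $\tilde G$-invariant category kills the obstruction space precisely because no non-zero holomorphic vector field on $X$ is $G$-invariant --- is exactly right. Be aware, though, that the paper does not prove this theorem at all: it is quoted as a special case of \cite[Theorem 1.4]{ap2}, accompanied only by a remark that under the stated hypothesis the analysis reduces to that of \cite{ap1}, so that the weights $(a_i)$ need not be perturbed; your proposal is essentially a reconstruction of the proof behind that citation rather than an alternative to anything written in the paper. Two small points of care: the phrase ``holomorphy potentials on the blow-up all lift from $X$'' is loose --- the correct statement is that the deficiency space governing the cokernel of the linearized operator is spanned by the ($G$-invariant) holomorphy potentials of the base, which here reduce to the constants; and the uniform invertibility of the Lichnerowicz operator in weighted H\"older spaces as $\ep\to 0$, which you correctly identify as the main technical obstacle, is only named and not carried out in your plan --- acceptable here, since the paper itself defers that entire analysis to \cite{ap1,ap2}.
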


\begin{remark}
\rm
The result of Theorem \ref{blupcscK} is slightly different from the result \cite[Theorem 1.4]{ap2}. We can choose the weights $(a_i)$ of the cscK metric on the blown-up manifold to be the same as the initial weights $(a_i)$. This is because under the hypothesis we've chosen, there is no non-zero holomorphic vector field, and thus the analysis of \cite{ap2} reduces to the analysis in \cite{ap1}. In this situation, the weights $(a_i)$ need not to be perturbed for the result to hold.
\end{remark}

\section{Proof of the main theorem}
\label{proof}

Let $X$ be a compact toric surface. We want to show that a torus equivariant blow-up of $X$ admits a cscK metric.
We will first build a sequence of toric surfaces $X_j$ endowed with cscK metrics and then show that 
we can reach one of these $X_j$ by an iterated blow-up of $X$.

\subsection{The surfaces $X_j$}
The $X_j$ are built inductively. Let $X_0=\CC\PP^1\times\CC\PP^1$.
Then $X_{j+1}$ is the blow-up of $X_j$ at its $2^{j+2}$ torus fixed points.
Using the fan description of the toric blow-up process,
the fans describing $\CC\PP^1\times\CC\PP^1$, $X_1$ and $X_2$ are
represented on Figures 3, 4 and 5.

\begin{figure}[htbp]
\label{fig1}
\psset{unit=0.95cm}
\begin{pspicture}(-3.5,-4)(3.5,0)
$$
\xymatrix @M=0mm{
\bullet & \bullet & \bullet & \bullet & \bullet \\
\bullet & \bullet & \bullet & \bullet & \bullet \\
 \bullet & \bullet & \bullet\ar[u]^{e_2}\ar[r]^{e_1}\ar[l]\ar[d]& \bullet & \bullet \\
\bullet & \bullet & \bullet & \bullet & \bullet \\
\bullet & \bullet & \bullet & \bullet & \bullet 
}
$$
\end{pspicture}

\caption{$\CC\PP^1\times\CC\PP^1$}
\end{figure}

\begin{figure}[htbp]
\label{fig2}
\psset{unit=0.95cm}
\begin{pspicture}(-3.5,-4)(3.5,0)
$$
\xymatrix @M=0mm{
\bullet & \bullet & \bullet & \bullet & \bullet \\
\bullet & \bullet & \bullet & \bullet & \bullet \\
 \bullet & \bullet & \bullet\ar[u]\ar[r]\ar[l]\ar[d]\ar[rd]\ar[ru]\ar[ld]\ar[lu]& \bullet & \bullet \\
\bullet & \bullet & \bullet & \bullet & \bullet \\
\bullet & \bullet & \bullet & \bullet & \bullet 
}
$$
\end{pspicture}
\caption{$X_1$}
\end{figure}

\begin{figure}[htbp]
\label{fig3}
\psset{unit=0.95cm}
\begin{pspicture}(-3.5,-4)(3.5,0)
$$
\xymatrix @M=0mm{
\bullet & \bullet & \bullet & \bullet & \bullet \\
\bullet & \bullet & \bullet & \bullet & \bullet \\
 \bullet & \bullet & \bullet\ar[u]\ar[r]\ar[l]\ar[d]\ar[rd]\ar[ru]\ar[ld]\ar[lu]\ar[uur]\ar[urr]
\ar[llu]\ar[uul]\ar[rrd]\ar[rdd]\ar[ldd]\ar[lld]
& \bullet & \bullet \\
\bullet & \bullet & \bullet & \bullet & \bullet \\
\bullet & \bullet & \bullet & \bullet & \bullet 
}
$$
\end{pspicture}
\caption{$X_2$}
\end{figure}

\newpage
\begin{proposition}
\label{prop}
Each of the $X_j$ admits a cscK metric.
\end{proposition}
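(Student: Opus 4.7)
The plan is to proceed by induction on $j \geq 0$, applying the equivariant blow-up theorem of Arezzo--Pacard--Singer (Theorem~\ref{blupcscK}) at each step. The base case $j=0$ is immediate: $X_0 = \mathbb{CP}^1\times\mathbb{CP}^1$ carries the product of Fubini--Study metrics $\omega_0$, which is cscK.

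For the inductive step, assume $(X_j,\omega_j)$ is cscK. To apply Theorem~\ref{blupcscK} and produce a cscK metric on $X_{j+1}$ by blowing up all $2^{j+2}$ torus fixed points simultaneously, I need a finite group $G_j \subset \mathrm{Aut}(X_j)$ of $\omega_j$-isometries such that (i) the set of torus fixed points is a union of $G_j$-orbits, and (ii) no non-zero holomorphic vector field on $X_j$ is $G_j$-invariant. The natural candidate is the subgroup $\Pi_j \subset GL(N)$ of lattice automorphisms preserving the fan $\Sigma(X_j)$, acting on $X_j$ by toric automorphisms. By construction, every $X_j$ is invariant under the $90^\circ$ lattice rotation and the reflections fixing the coordinate axes, so $\Pi_j$ always contains the dihedral group $D_4$ of order $8$. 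This $D_4$ permutes the $2$-dimensional cones, giving (i). For $j \geq 1$, blowing up the four fixed points of $X_0$ already destroys the extra $\mathfrak{sl}_2\oplus\mathfrak{sl}_2$, so $\mathfrak{aut}(X_j) = \mathfrak{t}^\mathbb{C} = N\otimes\mathbb{C}$; since the $\pi/2$ rotation in $D_4$ acts on $N_\mathbb{C}$ without nonzero fixed vector, (ii) is automatic.

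The case $j=0$ requires slightly more care, because $\mathfrak{aut}(X_0) = \mathfrak{sl}_2(\mathbb{C})\oplus\mathfrak{sl}_2(\mathbb{C})$ is larger than $\mathfrak{t}^\mathbb{C}$. I enlarge $G_0$ beyond $\Pi_0$ to include the involutions $z_i \mapsto -z_i$ on each factor together with the swap $(z_1,z_2)\mapsto(z_2,z_1)$; a direct calculation on the standard $\mathfrak{sl}_2$-basis $\{\partial_z, z\partial_z, z^2\partial_z\}$ shows that the resulting finite group kills the full $\mathfrak{sl}_2\oplus\mathfrak{sl}_2$ while still preserving the Fubini--Study metric and the four torus fixed points $\{0,\infty\}^2$. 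Having arranged (i)--(ii) at step $j$, Theorem~\ref{blupcscK} yields a cscK metric $\omega_{j+1}$ on $X_{j+1}$ along with a lifted finite group of isometries $\tilde{G}_j$; setting $G_{j+1}:=\tilde{G}_j$, one observes that $\tilde{G}_j$ contains the lift of $\Pi_j$, which by toric equivariance of the blow-up coincides with (a subgroup of) $\Pi_{j+1}$ and in particular still contains $D_4$. The induction then continues with $\mathfrak{aut}(X_{j+1}) = \mathfrak{t}^\mathbb{C}$.

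The main technical point is the verification of (ii), which rests on two facts: the uniform presence of $D_4 \subset \Pi_j$ (a combinatorial observation about the fans depicted in Figures~3--5 and their iterated blow-ups), and the collapse of $\mathfrak{aut}(X_j)$ to the toric Lie algebra for $j \geq 1$. Once these are in hand, the repeated application of Theorem~\ref{blupcscK} is routine.
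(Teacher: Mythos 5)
Your proof is correct and follows essentially the same strategy as the paper: an induction that applies the equivariant Arezzo--Pacard result (Theorem~\ref{blupcscK}) at each stage, using a finite group of symmetries of the polytope/fan --- enlarged at the first step to kill the extra automorphisms of $\CC\PP^1\times\CC\PP^1$ --- so that no non-zero holomorphic vector field is invariant, and then lifting that group to the blow-up. The only cosmetic difference is your choice of the dihedral group $D_4$ (exploiting the order-four rotation of the fan), where the paper works with the group generated by the coordinate swaps and sign changes, i.e.\ the reflection symmetries of the square polytope; both choices fix only $0$ in $\ft^{\CC}$.
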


\begin{proof}
We proceed by induction. First consider a cscK product metric $\om_0$ on $X_0=\CC\PP^1\times\CC\PP^1$ with the same volume on each factor, so that the associated polytope $\Delta_0$ of the K\"ahler manifold is a square (see \cite{gui} for the description of a toric K\"ahler manifold in term of a polytope). This polytope is represented in Figure 6.
\begin{figure}[htbp]
\psset{unit=0.85cm}
\begin{pspicture}(0,-6)(12,0)
\psframe[linecolor=white](0.5,-4.5)(3.5,-1.5)
\psline[linewidth=0.5pt, linestyle=dotted]{<->}(6,0)(6,-6)
\psline{-}(4,-5)(4,-1)
\psline{-}(8,-5)(8,-1)
\psline[linewidth=0.5pt, linestyle=dotted]{<->}(3,-3)(9,-3)
\psline{-}(4,-5)(8,-5)
\psline{-}(4,-1)(8,-1)
\end{pspicture}
\caption{Polytope $\Delta_0$ associated to $(X_0, \om_0)$.}
\end{figure}
Consider the subgroup $G$ of $\Aut(X_0)=\PP GL_2(\CC)\times \PP GL_2(\CC)$ generated by the following elements, described in terms of
their action in homogeneous coordinates:
$$
\begin{array}{ccc}
([u_1,u_2],[v_1,v_2])&\longmapsto&([u_2,u_1],[v_1,v_2]),\\
([u_1,u_2],[v_1,v_2])&\longmapsto&([u_1,u_2],[v_2,v_1]),\\
([u_1,u_2],[v_1,v_2])&\longmapsto&([-u_1,u_2],[v_1,v_2]),\\
([u_1,u_2],[v_1,v_2])&\longmapsto&([u_1,u_2],[-v_1,v_2]).
\end{array}
$$
By construction, $G$ is a finite group of isometries of $\om_0$ as it is isomorphic to a finite group of symmetries of the associated polytope.
Then the $G$-orbit of $([1,0],[1,0])$ consists of the four fixed points $\lbrace p^0_1,\ldots, p^0_4 \rbrace$ under the torus action on $X_0$ induced by the $\CC^*$-action on each $\CC\PP^1$.
No non-zero holomorphic vector field of $X_0$ is invariant under the $G$ action, thus by Theorem~\ref{blupcscK}, $X_1=\rmBl_{(p^0_1,\ldots,p_4^0)}(X_0)$
admits a cscK metric $\om_1$. Note that the weights of the exceptional divisors in $X_1$ are the same and have to be small by construction. A representation of the polytope $\Delta_1$ associated to $\om_1$ is pictured in Figure 7.
\begin{figure}[htbp]
\psset{unit=0.85cm}
\begin{pspicture}(0,-6)(12,0)
\psframe[linecolor=white](0.5,-4.5)(3.5,-1.5)
\psline[linewidth=0.5pt, linestyle=dotted]{<->}(6,0)(6,-6)
\psline{-}(4,-4.5)(4,-1.5)
\psline{-}(8,-4.5)(8,-1.5)
\psline[linewidth=0.5pt, linestyle=dotted]{<->}(3,-3)(9,-3)
\psline{-}(4.5,-5)(7.5,-5)
\psline{-}(4.5,-1)(7.5,-1)

\psline{-}(4,-4.5)(4.5,-5)
\psline{-}(7.5,-5)(8,-4.5)
\psline{-}(8,-1.5)(7.5,-1)
\psline{-}(4.5,-1)(4,-1.5)

\end{pspicture}
\caption{Polytope $\Delta_1$ associated to $(X_1, \om_1)$.}
\end{figure}
From Theorem \ref{blupcscK}, the metric $\om_1$ admits the lift of $G$ as a subgroup of isometries. Let denote $G_1$ the lift of the subgroup of  $G$ generated by
$$
\begin{array}{ccc}
([u_1,u_2],[v_1,v_2])&\longmapsto&([u_2,u_1],[v_1,v_2]),\\
([u_1,u_2],[v_1,v_2])&\longmapsto&([u_1,u_2],[v_2,v_1]).
\end{array}
$$
This group corresponds to the symmetries of $\Delta_1$ with respect to the two coordinate axes.
The identity component of  the group $\Aut(X_1)$ is the complex torus that defines the toric structure of $X_1$. Then no non-zero holomorphic vector field of $X_1$ is $G_1$-invariant. We apply Theorem \ref{blupcscK} to $(X_1,\om_1)$ blowing up the $G_1$-orbits of the fixed points under the torus action and choosing the weights $(a_i)$ to be all the same. The surface obtained is exactly $X_2$, endowed with a cscK metric $\om_2$. Once again $G_1$ lifts to a group $G_2$ of isometries of $\om_2$ and no non-zero holomorphic vector field on $X_2$ is $G_2$-invariant.
By induction, we build from a cscK metric $\om_j$ on $X_j$, $j\geq 1$, a cscK metric $\om_{j+1}$ on $X_{j+1}$ such that the finite subgroup $G_j$ of isometries of $\om_j$ lifts to a finite subgroup of isometries $G_{j+1}$ of $\om_{j+1}$.  Note that for each $j\geq 1$, 
the Lie algebra of $\Aut(X_j)$ is that of the complex two torus that defines the toric structure, and no non-zero holomorphic vector field is $G_j$-invariant.
By iteration of Arezzo and Pacard theorem, i.e., Theorem \ref{blupcscK}, working modulo $G_j$ at each step and blowing up the $G_j$-orbits of each torus fixed points,
giving the same weight to each new exceptional divisor, we obtain the metrics $\om_j$.
\end{proof}

\subsection{From $X$ to $X_j$}

To end the proof of the theorem, we will use the following two lemmas:

\begin{lemma}
 \label{lemma1}
For any sequence of $m$ toric blow-ups of $\FF_n$,
$\rmBl_{(p_m,\ldots,p_1)}(\FF_n)$,
one of the following holds:
\begin{itemize}
 \item $n=0$,
 \item there is a sequence of $m$ toric blow-ups of $\FF_{n-1}$ such that 
$\rmBl_{(p_m,\ldots,p_1)}(\FF_n)=\rmBl_{(q_m,\ldots,q_1)}(\FF_{n-1})$, or
 \item there is a point $p_{m+1}\in \rmBl_{(p_m,\ldots,p_1)}(\FF_n)$ such that there is a sequence of $m+1$ toric blow-ups 
of $\FF_{n-1}$ such that 
$\rmBl_{(p_{m+1},\ldots,p_1)}(\FF_n)=\rmBl_{(q_{m+1},\ldots,q_1)}(\FF_{n-1}).$
\end{itemize}
\end{lemma}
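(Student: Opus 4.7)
The approach is combinatorial, working with the fan $\Sigma = \Sigma(X)$ of $X = \rmBl_{(p_m,\ldots,p_1)}(\FF_n)$. Recall that $\Sigma$ contains the four rays of $\FF_n$,
$$\rho_1 = (0,-1), \quad \rho_2 = (1,0), \quad \rho_3 = (0,1), \quad \rho_4 = (-1,-n),$$
together with $m$ additional rays coming from the iterated blow-ups. Let $\rho_4' := (-1, -(n-1))$ denote the fourth ray of $\FF_{n-1}$. The guiding identity is
$$\rho_3 + \rho_4 = \rho_4', \qquad \text{equivalently} \qquad \rho_1 + \rho_4' = \rho_4,$$
so star-subdividing $\langle \rho_3, \rho_4 \rangle$ in $\Sigma_{\FF_n}$ and star-subdividing $\langle \rho_1, \rho_4' \rangle$ in $\Sigma_{\FF_{n-1}}$ produce isomorphic toric surfaces.

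Assuming $n \geq 1$ (otherwise case 1 holds), the plan is to split into two cases according to whether $\rho_4' \in \Sigma^{(1)}$. First I would prove the sub-claim that \emph{if $\rho_4' \notin \Sigma^{(1)}$, then the $2$-cone $\langle \rho_3, \rho_4 \rangle$ is still in $\Sigma$.} This follows by examining the iterated blow-up sequence: the first blow-up performed at a fixed point lying inside $\langle \rho_3, \rho_4 \rangle$ would have to be at $V_{\langle \rho_3, \rho_4 \rangle}$ itself, since no prior subdivision has occurred there, and hence would add the ray $\rho_3 + \rho_4 = \rho_4'$. Thus, the absence of $\rho_4'$ forces no such blow-up to happen and $\langle \rho_3, \rho_4 \rangle$ remains undivided in $\Sigma$.

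In Case A ($\rho_4' \in \Sigma^{(1)}$), the fan $\Sigma$ contains the fan of $\FF_{n-1}$ as a sub-fan, and the rays of $\Sigma$ lying inside each of the four $2$-cones of $\FF_{n-1}$ form a smooth refinement of that $2$-cone. The crux is then to invoke the classical toric fact that every smooth refinement of a smooth $2$-cone is obtained by a sequence of star subdivisions, and to concatenate these across the four cones. This exhibits $X$ as an iterated torus-equivariant blow-up of $\FF_{n-1}$ with exactly $|\Sigma^{(1)}| - 4 = m$ blow-ups, giving case 2. In Case B ($\rho_4' \notin \Sigma^{(1)}$), the sub-claim provides a torus fixed point $p_{m+1} := V_{\langle \rho_3, \rho_4 \rangle}$ in $X$; blowing it up adds $\rho_4'$ and places the resulting surface in Case A, now with $m+1$ blow-ups total, giving case 3.

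The main obstacle is the classical toric fact invoked in Case A. Its proof goes by induction on the number of inner rays $v_1, \ldots, v_{k-1}$ of a smooth refinement $v_0, v_1, \ldots, v_k$, the key step being to locate an inner ray satisfying $v_{i-1} + v_{i+1} = v_i$ (i.e.\ $c_i = 1$ in the relation $v_{i-1} + v_{i+1} = c_i v_i$), which is then contractible by a smooth toric blow-down. In coordinates where $v_0 = (1,0)$ and $v_k = (0,1)$, writing $v_i = (a_i,b_i)$, the recursion $b_{i+1} = c_i b_i - b_{i-1}$ with $b_0 = 0$, $b_1 = 1$ forces $b_i \geq i$ for all $i \geq 1$ whenever all $c_i \geq 2$, contradicting the boundary condition $b_k = 1$ once $k \geq 2$. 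Contracting the identified ray reduces to a refinement with one fewer inner ray, closing the induction.
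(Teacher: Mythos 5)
Your proof is correct, but it routes around the paper's argument in a noticeable way. The paper keeps track of the blow-up \emph{sequence}: it splits the $m$ blow-ups into those performed inside the cone $\sigma_1=\RR^+(0,1)+\RR^+(-1,-n)$ and the rest, argues that the two kinds commute so the $\sigma_1$-blow-ups can be moved to the front, and then observes that the first such blow-up (necessarily at $V_{\sigma_1}$, adding the ray $(0,1)+(-1,-n)=(-1,-(n-1))$) turns the base into a one-point blow-up of $\FF_{n-1}$, after which the remaining sequence is carried over verbatim; if no $\sigma_1$-blow-up occurs one performs it by hand, giving the third case. You instead forget the sequence entirely and work with the final fan $\Sigma$: the dichotomy on whether $\rho_4'=(-1,-(n-1))$ lies in $\Sigma^{(1)}$ is exactly the paper's dichotomy on whether a type-one blow-up occurs (your sub-claim that $\rho_4'\notin\Sigma^{(1)}$ forces $\langle\rho_3,\rho_4\rangle\in\Sigma$ is the right justification), but to produce the blow-up sequence over $\FF_{n-1}$ you invoke the factorization of a smooth refinement of a smooth complete $2$-dimensional fan into star subdivisions, and you supply the standard continued-fraction argument for it. What your approach buys is that it avoids the paper's slightly informal ``these blow-ups commute, so reorder them'' step, replacing it with a clean citation-or-proof of the toric factorization theorem (the same fact underlying Proposition~\ref{classif}); the cost is that you must prove that factorization lemma, which is more machinery than the paper uses. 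The counts match in both arguments ($m$ blow-ups in case~2, $m+1$ in case~3), so there is no gap.
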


\begin{lemma}
\label{lemma2}
For any iterated toric blow-up $Y=\rmBl_{(q_m,\ldots,q_1)}(X_0)$ of $X_0$,
there is an iterated toric blow-up of $Y$ such that
$$
\rmBl_{(q'_r,\ldots,q'_1)}(Y)=X_j
$$
for some $j$.
\end{lemma}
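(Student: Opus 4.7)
The plan is to show that for $j$ sufficiently large, the fan $\Sigma(X_j)$ is a refinement of $\Sigma(Y)$, and that moreover every such refinement can be realized by a sequence of star subdivisions, which are precisely toric blow-ups at torus fixed points. First I would observe that in each quadrant of $N_\RR$, the rays of $\Sigma(X_j)$ are exactly the Stern--Brocot mediants of depth $\leq j$ between the two coordinate rays, obtained by iteratively inserting the sum of adjacent rays. Since the Stern--Brocot tree enumerates every primitive vector in $N$, each of the finitely many rays of $Y$ appears in $\Sigma(X_j)^{(1)}$ once $j$ is chosen large enough. From $\Sigma(Y)^{(1)} \subset \Sigma(X_j)^{(1)}$ and completeness of both fans, each 2-dimensional cone $\sigma = \langle u_1, u_2 \rangle$ of $\Sigma(Y)$ gets subdivided by some angularly ordered sequence $u_1 = w_0, w_1, \ldots, w_\ell = u_2$ of rays of $X_j$ into the smooth cones $\langle w_i, w_{i+1} \rangle$ of $\Sigma(X_j)$.

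Next I would reduce the problem to a purely local combinatorial lemma: any smooth refinement $u_1 = w_0, \ldots, w_\ell = u_2$ of a smooth 2-dimensional cone $\langle u_1, u_2 \rangle$ is obtained by a sequence of star subdivisions. By induction on $\ell$, it suffices to locate some interior index $1 \leq i \leq \ell-1$ with $w_i = w_{i-1} + w_{i+1}$ (a \emph{removable} ray, the toric avatar of a $(-1)$-curve inside $\sigma$): such a $w_i$ can be added last in the blow-up sequence, and removing it leaves a smooth refinement with one fewer interior ray, to which the inductive hypothesis applies.

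The heart of the argument---and the step I expect to be the main obstacle---is the existence of a removable ray. After a $GL_2(\ZZ)$ change of basis I may assume $u_1 = (1,0)$ and $u_2 = (0,1)$. Smoothness gives $\det(w_i, w_{i+1}) = 1$ for each adjacent pair, and a direct linear-algebra computation in $N\otimes_{\ZZ}\RR$ yields $w_{i-1} + w_{i+1} = a_i w_i$ with $a_i = \det(w_{i-1}, w_{i+1}) \in \ZZ_{>0}$, so removability of $w_i$ corresponds to $a_i = 1$. Assume for contradiction that $a_i \geq 2$ for every $1 \leq i \leq \ell - 1$, and consider the $\ell^{1}$-norm $\|w_i\|_1 := p_i + q_i$ for $w_i = (p_i, q_i)$. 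The inequality $\|w_{i-1}\|_1 + \|w_{i+1}\|_1 \geq 2\,\|w_i\|_1$ makes $i \mapsto \|w_i\|_1$ a convex function on $\{0, \ldots, \ell\}$ with boundary values $\|w_0\|_1 = \|w_\ell\|_1 = 1$; convexity then forces $\|w_i\|_1 \leq 1$ on the interior. But each interior $w_i$ lies strictly inside the first quadrant with both integer coordinates $\geq 1$, so $\|w_i\|_1 \geq 2$, a contradiction. Performing the resulting star subdivisions inside each 2-cone of $\Sigma(Y)$ then exhibits $X_j$ as an iterated toric blow-up of $Y$, as required.
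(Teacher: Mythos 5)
Your proof is correct, and it takes a genuinely different route from the paper's in the step that actually requires an argument. Both proofs start from the same structural observation: since $X_{j+1}$ is obtained from $X_j$ by inserting the mediant ray in every two-dimensional cone, the rays of $\Sigma(X_j)$ in each quadrant are exactly the mediants of depth at most $j$, so the finitely many rays of $\Sigma(Y)$ (each a primitive vector reached by finitely many mediant insertions) are all contained in $\Sigma^{(1)}(X_j)$ for $j$ large. Where you diverge is in how the missing rays get added. The paper implicitly relies on the order structure of the mediant tree --- one adds the absent rays of $\Sigma(X_{l(Y)})$ in order of increasing depth, so that each new ray is the sum of two currently adjacent rays --- and pins down the specific target $X_{l(Y)}$ via the invariant $l_0(Y)$; this also feeds into the later bound on the number of blow-ups. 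You instead prove the general factorization statement that any smooth refinement of a smooth complete two-dimensional fan is a composite of star subdivisions, via the existence of a removable ray $w_i = w_{i-1}+w_{i+1}$; your convexity argument on $\|w_i\|_1$ (using $w_{i-1}+w_{i+1}=\det(w_{i-1},w_{i+1})\,w_i$ and additivity of the $\ell^1$-norm on the positive orthant) is a clean and complete proof of that local lemma, which is the toric avatar of factoring a birational morphism of smooth surfaces into $(-1)$-curve contractions. What your approach buys is rigor and self-containedness at the one point the paper glosses over (that the refinement is realizable by blow-ups); what the paper's approach buys is an explicit, minimal choice of $j$ in terms of $l(Y)$, which it needs later for the counting estimate. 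Both are valid proofs of the lemma as stated, which only asks for some $j$.
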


The classification of toric surfaces is the key for the proof of the theorem:

\begin{proof}[Proof of Theorem~\ref{theor}]
By Proposition~\ref{classif}, there exists a sequence of toric blow-ups such that
$X=\rmBl_{(p_m,\ldots,p_1)}(\FF_n)$. By iterating Lemma~\ref{lemma1}, there are toric blow-ups $\rmBl_{(q_m,\ldots,q_1)}(X)$ of $X$ such that $\rmBl_{(q_m,\ldots,q_1)}(X)$
is an iterated blow-up of $\FF_0=X_0$. Then by Lemma~\ref{lemma2} we can blow-up $\rmBl_{(q_m,\ldots,q_1)}(X)=\rmBl_{(q'_r,\ldots,q'_1)}(X_0)$ to reach one of the $X_j$.
By Proposition~\ref{prop}, this blow-up of $X$ has a cscK metric.
\end{proof}

We end this section by the proof of the two lemmas:

\begin{proof}[Proof of Lemma~\ref{lemma1}]
 Suppose $n\neq 0$. Consider the fan describing $\FF_n$ on Figure 8.

\begin{figure}[htbp]
\label{fig6}
\psset{unit=0.95cm}
\begin{pspicture}(-4,-4.5)(4,0.5)
$$
\xymatrix @M=0mm{
\bullet & \bullet & \bullet & \bullet & \bullet \\
\bullet & \bullet & \bullet & \bullet & \bullet \\
 \bullet & \bullet & \bullet\ar[u]\ar[r]\ar[ldd]\ar[d]& \bullet & \bullet \\
\bullet & \bullet & \bullet & \bullet & \bullet \\
\bullet & \bullet & \bullet & \bullet & \bullet 
}
$$
\rput[bl]{0}(-3.5,-1.7){$\sigma_1$}

\end{pspicture}
\caption{$\FF_n$}
\end{figure}

Then a blow-up of $\FF_n$ is described by adding a ray generated by the sum of two adjacent primitive vectors
in $\Sigma^{(1)}(\FF_n)$ to this fan. Each further blow-up is described by the same process so we can separate
the blow-ups of $\FF_n$ into two types, those arising by adding a ray in the cone $\sigma_1=\RR^+ (0,1)+\RR^+ (-1,-n)$,
called type one blow-ups, and the other.
These two types of blow-ups commute, we can perform every blow-up of type 1 before the other blow-ups to obtain
$\rmBl_{(p_m,\ldots,p_1)}(\FF_n)$.  Then there are two possibilities. If there is no blow-up of type 1,
we blow-up $V_{\sigma_1}$ and we are in the third case of Lemma~\ref{lemma1}. Indeed, the blow-up of $\FF_n$ at $V_{\sigma_1}$
is a one-point blow-up of $\FF_{n-1}$, (see Figure 9).

\begin{figure}[htbp]
\psset{unit=0.95cm}
\begin{pspicture}(-4,-4.5)(4,0.5)
$$
\xymatrix @M=0mm{
\bullet & \bullet & \bullet & \bullet & \bullet \\
\bullet & \bullet & \bullet & \bullet & \bullet \\
 \bullet & \bullet & \bullet\ar[u]\ar[r]\ar[ldd]\ar[ld]\ar[d]& \bullet & \bullet \\
\bullet & \bullet & \bullet & \bullet & \bullet \\
\bullet & \bullet & \bullet & \bullet & \bullet 
}
$$

\end{pspicture}
\caption{$\rmBl_p(\FF_n)=\rmBl_q(\FF_{n-1})$}
\end{figure}

In the other case, the first blow-up of type one is necessarily 
at $V_{\sigma_1}$ and we are in the second case of Lemma~\ref{lemma2}.
\end{proof}

We need to introduce some definitions. 
We say that an iterated toric blow-up $\rmBl_{(p_m,\ldots,p_1)}(X)$ is a \textit{purely iterated blow-up}
if for each $i\geq 2$, $p_i$ lies on the exceptional divisor $E_{i-1}$ coming from the blow-up
at $p_{i-1}$. In that case, we call $m$ the \textit{length}
of this iterated blow-up.
Let $X$ be a toric surface and $n$ be the minimal integer such that $X$ is obtained from toric blow-ups
of $\FF_n$. Then we define the integer $l(X)$ to be the maximal length of purely iterated blow-ups arising in the 
description of $X$ from $\FF_n$.

\begin{proof}[Proof of Lemma~\ref{lemma2}]
Let $\Sigma^{(1)}(Y)$ be the set of rays of the fan of $Y$, identified with primitive generators of these rays. Then
the fan description of a toric blow-up implies that there is a sequence of fans $\Sigma_j$
such that $\Sigma_0=\Sigma(X_0)$, $\Sigma_m=\Sigma(Y)$ and $\Sigma_{j+1}^{(1)}$ is obtained from $\Sigma_{j}^{(1)}$ by adding a ray generated by $v_i^j+v^j_{i+1}$ , with $v^j_i$ and $v^j_{i+1}$ two adjacent elements of $\Sigma_{j}^{(1)}$.
Consider the set of all $(m+1)$-tuples $(v_0,\ldots,v_m)$ of $\Sigma_{m}^{(1)}$ such that $v_i\in\Sigma^{(1)}_i$,
and for each $i\geq 1$, $v_i=v_{i-1}+w_{i-1}$ with $w_{i-1}\in \Sigma^{(1)}_{i-1}$ adjacent to $v_{i-1}$ or $w_{i-1}=0$.
The case when $w_{i-1}\neq 0$ corresponds to the blow-ups.
Denote $l(v_0,\ldots,v_m)$ the number of distinct elements in $(v_0,\ldots,v_m)$ and
let $l_0(Y)$ be the maximum of the $l(v_0,\ldots,v_m)$. 
Then $l_0(Y)=l(Y)+1$. Then we can blow-up
$Y$ in order to make its fan symmetrical until we obtain the fan describing $X_{l(Y)}$. 
It is enough to add the rays that are missing so that we can get every $(m+1)$-tuple $(v_0,\ldots,v_m)$ with
$l_0(v_0,\ldots,v_m)=l_0(Y)$ which are coming from the set of rays of $X_{l(Y)}$.
An example is pictured on Figure 10.

\begin{example}
 \begin{figure}[htbp]
\psset{unit=0.95cm}
\begin{pspicture}(-4,-4.5)(4,0.5)
$$
\xymatrix @M=0mm{
\bullet & \bullet & \bullet & \bullet & \bullet \\
\bullet & \bullet & \bullet & \bullet & \bullet \\
 \bullet & \bullet & \bullet\ar[u]\ar[ul]\ar[r]\ar[l]\ar[d]\ar[ru]\ar[ruu]\ar[ld]& \bullet & \bullet \\
\bullet & \bullet & \bullet & \bullet & \bullet \\
\bullet & \bullet & \bullet & \bullet & \bullet 
}
$$

\end{pspicture}
\caption{$Y$}
\end{figure}

On this example $l(Y)=2$ because of the sequence $\lbrace (0,1),(1,1),(1,2) \rbrace$.
We blow-up $4$ times to add symmetries, as represented on Figure 11. We blow-up further to reach $X_2$.

\begin{figure}[htbp]
\psset{unit=0.95cm}
\begin{pspicture}(-4,-4.5)(4,0.5)
$$
\xymatrix @M=0mm{
\bullet & \bullet & \bullet & \bullet & \bullet \\
\bullet & \bullet & \bullet & \bullet & \bullet \\
 \bullet & \bullet & \bullet\ar[u]\ar[ddl]\ar[ddr]\ar[dr]\ar[luu]\ar[lu]
\ar[r]\ar[l]\ar[d]\ar[ru]\ar[ruu]\ar[ld]& \bullet & \bullet \\
\bullet & \bullet & \bullet & \bullet & \bullet \\
\bullet & \bullet & \bullet & \bullet & \bullet 
}
$$
\end{pspicture}
\caption{adding symmetries}
\end{figure}
\end{example}
\end{proof}

\subsection{Bound on the number of blow-ups}

We prove the following upper bound on the minimal number of blow-ups that are needed to get a cscK metric with our method:

\begin{proposition}
 Let $X$ be a toric surface. Let $n$ be the minimal integer such that $X$ is obtained from toric blow-ups
of $\FF_n$. Then there is a sequence of at most 
$$
2^{n+l_0(X)+1}-\sharp (\Sigma^{(1)}(X))
$$
blow-ups of $X$ such that the blown-up surface admits a cscK metric.
\end{proposition}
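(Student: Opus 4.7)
The plan is to track the number of rays in the fan throughout the two-step construction from the proof of Theorem~\ref{theor}. The key observation is that each toric blow-up of a smooth toric surface adds exactly one ray to the fan, so the total number of blow-ups equals $\sharp\Sigma^{(1)}(\text{final}) - \sharp\Sigma^{(1)}(X)$, reducing the problem to estimating the number of rays of the $X_j$ we reach.

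First, I would apply Lemma~\ref{lemma1} iteratively to rewrite $X$ as an iterated toric blow-up $Y$ of $X_0=\FF_0$. Since each application either preserves the blow-up count (case 2) or adds exactly one blow-up (case 3), and since at most $n$ applications suffice to bring the Hirzebruch index from $n$ down to $0$, we obtain a surface $Y$ satisfying $\sharp\Sigma^{(1)}(Y) \leq \sharp\Sigma^{(1)}(X) + n$. Next, Lemma~\ref{lemma2} provides a further iterated toric blow-up of $Y$ reaching $X_{l(Y)}$, which carries a cscK metric by Proposition~\ref{prop}. A straightforward induction on $j$ using $X_{j+1}=\rmBl_{(\text{fixed points})}(X_j)$ shows that $\sharp\Sigma^{(1)}(X_j)=2^{j+2}$, and therefore, invoking the identity $l_0(Y)=l(Y)+1$ from the proof of Lemma~\ref{lemma2},
\begin{equation*}
\sharp\Sigma^{(1)}(X_{l(Y)}) = 2^{l(Y)+2} = 2^{l_0(Y)+1}.
\end{equation*}

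The crucial step is to bound $l_0(Y) \leq n + l_0(X)$. Each application of Lemma~\ref{lemma1} reduces the $\FF_k$-description to an $\FF_{k-1}$-description; under this change of base the generator $(-1,-k)$ of $\FF_k$ is re-expressed as the sum $(-1,-(k-1))+(0,-1)$, so any purely iterated chain of rays in the fan can have its length increased by at most one. After $n$ such reductions, $l_0$ grows by at most $n$, yielding the desired bound. Combining everything gives
\begin{equation*}
\sharp\Sigma^{(1)}(X_{l(Y)}) - \sharp\Sigma^{(1)}(X) = 2^{l_0(Y)+1} - \sharp\Sigma^{(1)}(X) \leq 2^{n+l_0(X)+1} - \sharp\Sigma^{(1)}(X),
\end{equation*}
which is exactly the estimate of the proposition.

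The main obstacle I anticipate is the combinatorial inequality $l_0(Y) \leq l_0(X) + n$. Although the intuition is clear, rigorously verifying it requires a careful case analysis of how the maximal purely iterated chain behaves at each reduction step of Lemma~\ref{lemma1}: one must distinguish whether the optimal chain already passes through $(-1,-(k-1))$ or whether prepending this ray genuinely extends the chain by one, and handle the possible additional corner blow-up of case 3 as a separate single contribution. The remaining steps are essentially bookkeeping consequences of the ray-counting principle.
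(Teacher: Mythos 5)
Your proposal is correct and follows essentially the same route as the paper: reduce $X$ to an iterated blow-up $Y$ of $\FF_0$ via at most $n$ applications of Lemma~\ref{lemma1}, reach $X_{l(Y)}$ via Lemma~\ref{lemma2}, and count blow-ups as the difference in the number of rays, using $\sharp\Sigma^{(1)}(X_j)=2^{j+2}$. The only difference is that you isolate and justify the inequality $l_0(Y)\leq n+l_0(X)$ explicitly, whereas the paper simply asserts that one reaches $X_{n+l_0(X)-1}$; this is a welcome clarification, not a different argument.
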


\begin{remark}
\rm
 Note that for $X=\FF_n$ we have $l_0(X)=1$ and $2^{n+2}-4$ is the number of blow-ups necessary to go from $\FF_n$ to $X_n$
and in that case this estimate is sharp for our method.
\end{remark}

\begin{proof}
 We need to estimate the number of blow-ups necessary to go from $X$ to the nearest $X_j$.
As $X$ is a blow-up of $\FF_n$ for $n$ minimal, there is a $n$ times blow-ups of $X$ such that $\rmBl_{(q_n,\ldots,q_1)}(X)$
is a blow-up of $X_0=\FF_0$. Then from the proof of Lemma~\ref{lemma2},
we know that there exists an iterated blow-up from $\rmBl_{(q_n,\ldots,q_1)}(X)$ to $X_{n+l_0(X)-1}$. 
So we can go from $X$ to $X_{n+l_0(X)-1}$ by an iterated blow-up. Then the number of blow-ups necessary is bounded
by the number of rays in $\Sigma^{(1)}(X_{n+l_0(X)-1})$ minus the number of rays in $\Sigma^{(1)}(X)$, which gives the result.
\end{proof}

This result on the minimal number of points to be blown-up in order to get a cscK metric is not sharp.
Indeed, from \cite[Section 4.1]{rt}, we know that the special blow-up of $\FF_2$ pictured on Figure 12 admits a cscK metric.

\begin{figure}[htbp]
\label{fig9}
\psset{unit=0.95cm}
\begin{pspicture}(-4.5,-6.5)(4.5,0)
$$
\xymatrix @M=0mm{
\bullet & \bullet & \bullet & \bullet & \bullet \\
\bullet & \bullet & \bullet & \bullet & \bullet \\
\bullet & \bullet & \bullet & \bullet & \bullet \\
 \bullet & \bullet & \bullet\ar[r]\ar[ru]\ar[ruu]\ar[rd]\ar[u]\ar[l]\ar[ld]\ar[ldd]\ar[lu]\ar[d]& \bullet & \bullet \\
\bullet & \bullet & \bullet & \bullet & \bullet \\
\bullet & \bullet & \bullet & \bullet & \bullet \\
\bullet & \bullet & \bullet & \bullet & \bullet 
}
$$
\end{pspicture}
\caption{A special blow-up of $\FF_2$}
\end{figure}

Here we do not need to blow-up further in order to reach $X_2$ (Figure 5). It suggests the following problem:

\begin{quest}
 What is the minimal number of blow-ups of $\FF_n$ necessary to ensure the existence of a cscK metric on the blown-up surface?
\end{quest}

\section{Blown-up surfaces and stabilities}
\label{stabi}

In GIT, several notions of stability for polarized varieties are studied. It is in general a hard problem to check 
that a polarized variety is stable or not. However, the existence of a cscK metric enables us to conclude for 
$K$-stability and asymptotic Chow stability in some cases.
In this section, we deduce from Theorem~\ref{theor} that each toric surface can be blown up to satisfy $K$-stability
and asymptotic Chow stability for some polarization.

\subsection{$X_j$ and K-stability}
\label{sec:L}
We refer the reader to \cite{st} for the definition of $K$-stability.
We can assume that the cscK metrics $\om_j$ built on the $X_j$ in Proposition \ref{prop} lie in rational classes. Up to scaling, we can
assume that these metrics represent a polarization $L_j$ of $X_j$, that is $[\om_j]=c_1(L_j)$. By a result of Stoppa \cite{st}, the $(X_j,L_j)$ are $K$-polystable. Together with Theorem~\ref{theor} we have the following corollary.

\begin{corollary}
 Let $X$ be a smooth compact toric surface. Then there is a sequence of toric blow-ups
$$\rmBl_{(p_m,\ldots,p_1)}(X) \rightarrow X$$ and a polarization
$L_X \rightarrow \rmBl_{(p_m,\ldots,p_1)}(X)$ such that $(\rmBl_{(p_m,\ldots,p_1)}(X),L_X)$ is $K$-polystable.
\end{corollary}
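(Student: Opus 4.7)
The plan is to compose the main theorem with Stoppa's theorem, after arranging the cscK metrics on the model surfaces $X_j$ to represent integral classes.

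First, I would apply Theorem~\ref{theor} to $X$. In fact the proof of Theorem~\ref{theor} does more than assert the existence of a cscK blow-up: it produces an explicit sequence of toric blow-ups $\rmBl_{(p_m,\ldots,p_1)}(X)$ that is isomorphic to one of the model toric surfaces $X_j$. So it suffices to realize each $X_j$ as a smooth polarized variety $(X_j,L_j)$ admitting a cscK metric in $c_1(L_j)$, and then set $L_X := L_j$ on $\rmBl_{(p_m,\ldots,p_1)}(X) \simeq X_j$.

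Second, I would show that the cscK metric $\om_j$ on $X_j$ constructed in Proposition~\ref{prop} can be chosen in a rational K\"ahler class. The induction starts from $X_0 = \CC\PP^1 \times \CC\PP^1$ equipped with a product of rescaled Fubini--Study metrics on each factor having equal volumes; this class is rational (indeed integral after clearing a common factor). At each inductive step, Theorem~\ref{blupcscK} produces a cscK metric on $X_{j+1}$ in the class
\[
\pi^*[\om_j] - \ep^2 \sum_{i} a_i \, \mathrm{PD}(E_i),
\]
for any $\ep \in (0,\ep_0)$ and any positive weights $a_i$. Since positive rationals are dense in the open parameter set allowed by Theorem~\ref{blupcscK}, I can choose $\ep^2$ and the $a_i$ to be positive rationals; as $\pi^*[\om_j]$ and each $\mathrm{PD}(E_i)$ are integral classes on $X_{j+1}$, the new K\"ahler class remains rational. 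Iterating, each $[\om_j]$ is rational, and after rescaling we may assume $[\om_j] = c_1(L_j)$ for an ample line bundle $L_j \to X_j$.

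Third, I would invoke Stoppa's theorem \cite{st} (in the form appropriate for manifolds with continuous automorphisms): the existence of a cscK metric in $c_1(L_j)$ implies that $(X_j,L_j)$ is $K$-polystable. Combined with the first two steps, this yields that $(\rmBl_{(p_m,\ldots,p_1)}(X), L_X)$ is $K$-polystable, as required.

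The main obstacle is a conceptual rather than computational one: Stoppa's original cscK $\Rightarrow$ $K$-stability argument was written assuming discrete automorphism group, but each $X_j$ has a non-discrete automorphism group (containing the two-dimensional complex torus). One must therefore invoke the polystable refinement, which is the content of the cited extensions and is precisely what accounts for the jump from $K$-stability to $K$-polystability in the statement. Apart from this input, the remaining verifications (rationality of classes, identification of the blow-up with $X_j$) follow mechanically from the preceding sections.
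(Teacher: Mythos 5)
Your proposal follows exactly the paper's own route: arrange for the cscK metrics $\om_j$ on the model surfaces $X_j$ to lie in rational classes, rescale to get polarizations $L_j$, apply Stoppa's theorem to conclude $K$-polystability of $(X_j,L_j)$, and combine with Theorem~\ref{theor}. The only difference is that you spell out the rationality argument (choosing $\ep^2$ and the weights rational in the Arezzo--Pacard classes) and flag the automorphism caveat in Stoppa's theorem, both of which the paper leaves implicit.
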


\subsection{Asymptotic Chow-polystability of $X_j$}

We refer to \cite{fut} for the definition of asymptotic Chow-stability. By a theorem of Mabuchi \cite{mab},
if $(X,L)$ is a polarized K\"ahler manifold with cscK metric in the class $c_1(L)$ and if Mabuchi's obstruction vanishes,
then $(X,L)$ is asymptotically Chow polystable. From the work of Ono \cite{on} and Futaki \cite{fut},
 Mabuchi's obstruction admits a simple description for toric varieties.
Assume that $(X,L)$ is a polarized toric manifold of complex dimension $n$, with $L$ a torus equivariant ample line bundle. 
Suppose that the lattice $N$ is identified with $\ZZ^n$. Then this polarization defines a polytope $\Delta\subset \RR^n$ that
encodes the symplectic structure of $X$ \cite{oda}. Then set for each $k\in\NN$
$$
\textbf{s}_{\Delta}(k)=\sum_{\textbf{a}\in k\Delta\cap \ZZ^n} \bf{e}(\textbf{a})
$$
and
$$
E_{\Delta}(k)=\sharp (k \Delta \cap \ZZ^n).
$$
Then 
$$
k \mapsto \Vol(\Delta)\textbf{s}_{\Delta}(k)-kE_{\Delta}(k) \int_{\Delta} \textbf{x} dv=\sum_i k^i \mathcal{F}_{\Delta,i}
$$
is a polynomial in $k$ with coefficients $\mathcal{F}_{\Delta,i}$, with $dv$ the euclidian volume of $\RR^n$.
Then Futaki has shown that the vanishing of Mabuchi's obstruction is equivalent to the vanishing of the
$\mathcal{F}_{\Delta,i}$.

\begin{lemma}
 Let $\Delta_j$ be the polytope associated to the toric polarized surface $(X_j,L_j)$ described in Section \ref{sec:L}.
Then $\mathcal{F}_{\Delta_j,1}=\mathcal{F}_{\Delta_j,2}=0$.
\end{lemma}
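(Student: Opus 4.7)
The plan is to exploit the dihedral symmetry that every polytope $\Delta_j$ inherits from the construction of $(X_j, \om_j)$ in Proposition~\ref{prop}. The metric $\om_0$ on $X_0 = \CC\PP^1 \times \CC\PP^1$ is a product cscK metric of equal volume on each factor, so $\Delta_0$ is a square; after translating it to be centered at the origin, $\Delta_0$ is invariant under the dihedral group $D_4 \subset \mathrm{GL}_2(\ZZ)$ generated by $(x,y)\mapsto(y,x)$, $(x,y)\mapsto(-x,y)$ and $(x,y)\mapsto(x,-y)$. At each inductive step, $\om_{j+1}$ is obtained from $\om_j$ by applying Theorem~\ref{blupcscK} with a single common small weight for every exceptional divisor over a $G_j$-orbit of torus fixed points. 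Combinatorially this truncates each vertex of $\Delta_j$ by the same amount, so the full $D_4$-invariance is preserved throughout the induction.

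The next observation is that the polynomial
$$
P_{\Delta}(k) := \Vol(\Delta)\,\textbf{s}_\Delta(k) - k\,E_\Delta(k) \int_\Delta \textbf{x}\, dv
$$
is invariant under translation of $\Delta$ by integer vectors: translating $\Delta$ by $\textbf{t} \in \ZZ^2$ adds $k\textbf{t}\,E_\Delta(k)$ to $\textbf{s}_\Delta(k)$ and $\textbf{t}\,\Vol(\Delta)$ to $\int_\Delta \textbf{x}\, dv$, and the resulting contributions to $P_\Delta(k)$ cancel. We may therefore assume that $\Delta_j$ is centered at the origin as a $D_4$-symmetric polytope.

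Since the lattice $\ZZ^2$ is $D_4$-stable, both the vector-valued sum $\textbf{s}_{\Delta_j}(k)$ and the centroid integral $\int_{\Delta_j} \textbf{x}\, dv$ are $D_4$-invariant vectors in $\RR^2$. However, the $D_4$-fixed subspace of $\RR^2$ is trivial: any vector fixed simultaneously by $(x,y)\mapsto(-x,y)$ and $(x,y)\mapsto(x,-y)$ must vanish. Hence $\textbf{s}_{\Delta_j}(k) = \textbf{0}$ for every $k \in \NN$ and $\int_{\Delta_j} \textbf{x}\, dv = \textbf{0}$, so $P_{\Delta_j}(k) \equiv \textbf{0}$ as a polynomial in $k$. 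All of its coefficients $\mathcal{F}_{\Delta_j, i}$ vanish, in particular $\mathcal{F}_{\Delta_j, 1} = \mathcal{F}_{\Delta_j, 2} = 0$.

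The only subtle point to check is that the inductive construction of $(X_j, \om_j)$ actually preserves the full dihedral group $D_4$ and not merely the smaller subgroup $G_j$ employed at each step in order to apply Theorem~\ref{blupcscK}. This is the case because the blow-up is performed simultaneously at \emph{all} torus fixed points with one common weight at each stage, so the polytope is truncated uniformly at every vertex; this uniformity retains the $D_4$-symmetry of the square even though the proof of Proposition~\ref{prop} only needs the smaller groups $G_j$ to kill the holomorphic vector fields.
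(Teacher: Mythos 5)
Your argument is correct and is essentially the paper's own proof made explicit: the paper simply asserts that the vanishing of $\mathcal{F}_{\Delta_j,1}$ and $\mathcal{F}_{\Delta_j,2}$ is ``a straightforward computation, using the symmetries of $\Delta_j$ with respect to the two coordinate axes,'' and your reflection argument showing $\textbf{s}_{\Delta_j}(k)$ and $\int_{\Delta_j}\textbf{x}\,dv$ both vanish is exactly that computation. The only point to tidy is that your reduction via integer translation requires the centre of symmetry of $\Delta_j$ to be a lattice point, which one arranges by replacing $L_j$ with a suitable power before translating.
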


\begin{proof}
This is a straightforward computation, using the symmetries of $\Delta_j$ with respect to the two coordinate axes.
Note that $\mathcal{F}_{\Delta,1}$ is the Futaki invariant and thus necessarily vanishes by the existence of the cscK metric.
\end{proof}

From this lemma and Mabuchi's theorem we deduce the following corollary.

\begin{corollary}
 Let $X$ be a smooth compact toric surface. Then there is a sequence of toric blow-ups of $X$
$$\rmBl_{(p_m,\ldots,p_1)}(X) \rightarrow X$$ and a polarization
$L_X \rightarrow \rmBl_{(p_m,\ldots,p_1)}(X)$ such that $(\rmBl_{(p_m,\ldots,p_1)}(X),L_X)$ is asymptotically Chow-polystable.
\end{corollary}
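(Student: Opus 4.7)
The plan is to reduce the corollary to the analogous statement for the model surfaces $X_j$ and then apply Mabuchi's theorem together with the lemma just established. By Theorem~\ref{theor}, we can find an iterated toric blow-up
\[
\rmBl_{(p_m,\ldots,p_1)}(X) \longrightarrow X
\]
whose total space is isomorphic, as a toric surface, to one of the $X_j$. The cscK metric $\om_j$ on $X_j$ produced in Proposition~\ref{prop} carries over to this iterated blow-up, so it is enough to produce a polarization $L_j$ on $X_j$ with $c_1(L_j)$ proportional to $[\om_j]$ and then set $L_X$ to be its pullback under the isomorphism.

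First I would address the rationality of the K\"ahler class, so that $\om_j$ actually represents (up to scale) the first Chern class of an equivariant line bundle. This is discussed at the start of Section~\ref{sec:L}: in the inductive construction of $\om_j$ via Theorem~\ref{blupcscK}, the weights $(a_i)$ attached to the exceptional divisors can be chosen freely in a small interval $(0,\ep_0)$. Picking them to be rational and rescaling, one obtains a cscK metric in a rational class, hence of the form $c_1(L_j)$ for some torus-equivariant ample line bundle $L_j$ on $X_j$. This step is where one has to be a bit careful: the Arezzo--Pacard result gives weights in a real interval, but since the interval is open and the set of rationals is dense, rationality can be arranged at every stage of the induction.

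With $(X_j,L_j)$ polarized and carrying a cscK metric, I would now invoke Mabuchi's theorem \cite{mab}: if $(X,L)$ admits a cscK metric in $c_1(L)$ and Mabuchi's obstruction vanishes, then $(X,L)$ is asymptotically Chow-polystable. By Ono--Futaki \cite{on,fut}, Mabuchi's obstruction for a polarized toric surface $(X_j,L_j)$ with polytope $\Delta_j$ is exactly the vanishing of the coefficients $\mathcal{F}_{\Delta_j,1}$ and $\mathcal{F}_{\Delta_j,2}$ of the polynomial $k\mapsto \Vol(\Delta_j)\textbf{s}_{\Delta_j}(k)-kE_{\Delta_j}(k)\int_{\Delta_j}\textbf{x}\,dv$. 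The preceding lemma asserts precisely this vanishing, using the symmetry of $\Delta_j$ under reflection across both coordinate axes (both components of the integrand become odd functions on a polytope symmetric about the axes).

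Combining these three ingredients, $(X_j,L_j)$ is asymptotically Chow-polystable, and hence so is $(\rmBl_{(p_m,\ldots,p_1)}(X),L_X)$ via the toric isomorphism with $X_j$. The main subtle point is really the rationality step in the inductive construction of $\om_j$; the Mabuchi/Chow implication is then a black-box application once the lemma guarantees the vanishing of the toric obstruction.
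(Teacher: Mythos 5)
Your proposal is correct and follows essentially the same route as the paper: reach one of the model surfaces $X_j$ via Theorem~\ref{theor}, arrange the cscK class to be rational so it defines a polarization $L_j$, and then combine the lemma on the vanishing of $\mathcal{F}_{\Delta_j,1}$ and $\mathcal{F}_{\Delta_j,2}$ with Mabuchi's theorem. The paper states this in one line, so your added care about choosing rational weights in the Arezzo--Pacard construction is a welcome (and accurate) elaboration rather than a deviation.
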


\end{document}